\documentclass[11pt]{article}

\usepackage{amsmath,amsthm,amssymb}
\usepackage{graphicx}
\usepackage{enumerate}  
\usepackage[colorlinks=true]{hyperref}
\usepackage{csquotes}

\newtheorem{theorem}{Theorem}[section]
\newtheorem{corollary}[theorem]{Corollary}

\newtheorem{proposition}[theorem]{Proposition}

\theoremstyle{definition}
\newtheorem{definition}[theorem]{Definition}
\newtheorem{remark}{Remark}
\newtheorem{example}{Example}
%\newtheorem*{notation}{Notation}

%\newcommand{\eps}[1]{{#1}_{\varepsilon}}

% colon centering alogn the equals-sign
\mathchardef\ordinarycolon\mathcode`\:
\mathcode`\:=\string"8000\def\R{\mathbb{R}}
\begingroup \catcode`\:=\active
  \gdef:{\mathrel{\mathop\ordinarycolon}}
\endgroup

% mathbb letters

\renewcommand{\R}{\mathbb{R}}

\newcommand{\rr}{\mathbb{R}}

\usepackage{mathrsfs}
%----- Mathscript ------

%\newcommand{\Cm}{\mathscr{C}}

%\newcommand{\Om}{\mathscr{O}}
%\newcommand{\Sm}{\mathscr{S}}

%--------------------
\newcommand{\Ac}{\mathcal{A}}

\newcommand{\Dc}{\mathcal{D}}
\newcommand{\Ec}{\mathcal{E}}
\newcommand{\Fc}{\mathcal{F}}

\newcommand{\Mc}{\mathcal{M}}

\newcommand{\Qc}{\mathcal{Q}}

\newcommand{\Sc}{\mathcal{S}}

%----------- New Math. Symbols ------------
                % Mittelgr. zentralerPunkt
%\newcommand{\varprod}{\mathop{\vcenter{\hbox{\Huge $\times$}}}}  
 						  %      
\newcommand{\WF}{\mathrm{WF}}                         % Wavefront set
\newcommand{\WFab}{\mathrm{WF_{(a,b)}}} 
\newcommand{\WFA}{\mathrm{WF_{A}}}

\newcommand{\norm}[1]{\left\lVert#1\right\rVert}      % Norm
              % Norm
\newcommand{\abs}[1]{\left|#1\right|}                 % Absolutbetrag
\newcommand{\paren}[1]{\left(#1\right)}               % Klammern
\newcommand{\bparen}[1]{\left[#1\right]}               % eckige Klammern
\newcommand{\sparen}[1]{\left\{#1\right\}}		      % Mengenklammer
             % eckige Klammern
       % Skalarprodukt
  % lineare H"ulle
            % abgeschl. lin. H"ulle
                  % Glattheitsmodul
     % range
     % domain

   % kernel
\renewcommand{\d}{\,\mathrm{d}}						  % within the integral sign

           % support
 % singular support
           %

 % Sine integral

 % Cauchy principal value

%---------- Symbols -----------------

\renewcommand{\epsilon}{\varepsilon}
\renewcommand{\rho}{\varrho}
\newcommand{\vp}{\varphi}
\newcommand{\thperp}{{\theta^\bot}}

\renewcommand{\tilde}{\widetilde}
%------------------------------------

%------ Function Spaces ------
% \def\LR{L^1(\R)}
% \def\LsqR{L^2(\R)}
% \def\LsqRw{L^2\left(\R^2,\frac{da db}{a^2}\right)}

%----------- Equations --------------------------
% \def\eqa{\begin{equation}}
% \def\eqe{\end{equation}}
% \def\eqna{\begin{equation*}}
% \def\eqne{\end{equation*}}
% \def\eqnarraya{\begin{eqnarray}}
% \def\eqnarraye{\end{eqnarray}}
% \def\eqnarrayna{\begin{eqnarray*}}
% \def\eqnarrayne{\end{eqnarray*}}
%------------------------------------------------

% TODO: \addchap funktioniert, es kommt jedoch eine Warnung
% --> spter evtl. wieder aendern
% Obsolete: use \addchap, \addsec instead!
% \newcommand{\nonumberchapter}[1]{%
% 	\chapter*{#1}%
% 	\addcontentsline{toc}{chapter}{#1}%
% }

% My marks

%Todd's commands

\newcommand{\tU}{\tilde{U}}

\newcommand{\WbdA}{W_{\operatorname{bd}(A)}}

\newcommand{\chib}{\chi_{B}}
\newcommand{\Pw}{\pi_{\omega}}

\newcommand{\LA}{\mathcal{L}_A}
\newcommand{\RmuA}{R_{\mu,A}}
\newcommand{\snm}{S^{n-1}}
\newcommand{\otp}{[0,2\pi]}
\newcommand{\rtwo}{{{\mathbb R}^2}}

\newcommand{\rn}{{{\mathbb R}^n}}
\newcommand{\Lab}{\mathcal{L}_{[a,b]}}

\newcommand{\st}{\hskip 0.3mm : \hskip 0.3mm}
\renewcommand{\th}{\theta}

\newcommand{\be}{\begin{equation}}
\newcommand{\ee}{\end{equation}}
\newcommand{\bea}{\begin{eqnarray}}
\newcommand{\eea}{\end{eqnarray}}
\newcommand{\bean}{\begin{eqnarray*}}
\newcommand{\eean}{\end{eqnarray*}}

\newcommand{\bel}[1]{\begin{equation}\label{#1}}

\newcommand{\eel}[1]{{\label{#1}\end{equation}}}

\newcommand{\inv}{^{-1}}

\newcommand{\sxr}{{S^1\times \mathbb{R}}}

\newcommand{\intt}{{\operatorname{int}}}
\newcommand{\cl}{{\operatorname{cl}}}
\newcommand{\bd}{{\operatorname{bd}}}

\newcommand{\smo}{\setminus\boldsymbol{0}}

\newcommand{\xio}{{\xi_0}}

\newcommand{\chiab}{\chi_{{[a,b]}}}
\newcommand{\chia}{\chi_{A}}
\newcommand{\chiar}{\chi_{A\times \mathbb{R}}}
\newcommand{\dx}{\mathrm{d}x}

\newcommand{\dr}{\mathrm{d}r}
\newcommand{\ds}{\mathrm{d}s}

\newcommand{\dphi}{\mathrm{d}\phi}
\newcommand{\Om}{\Omega}

\newcommand{\Vc}{\mathcal{V}}
\newcommand{\VA}{\mathcal{V}_{A}}
\newcommand{\VintA}{\mathcal{V}_{\operatorname{int}(A)}}

\newcommand{\VrA}{{V}^R_{A}}

\newcommand{\Vrab}{{V}^R_{[a,b]}}

\newcommand{\omo}{\om_0}

\newcommand{\lamo}{\lambda_0}
\newcommand{\lamon}{\lambda_1}

\newcommand{\Rmu}{R_\mu}
\newcommand{\Rmuab}{R_{\mu,[a,b]}}
\newcommand{\Rstn}{R^*_\nu}
\newcommand{\Kvp}{\mathcal{K}_\varphi}

\newcommand{\Lvp}{\mathcal{L}_{\varphi}}

\newcommand{\MB}{\mathcal{M}_B}
\newcommand{\Mst}{\mathcal{M}^\ast}

\newcommand{\om}{\omega}

%% Place the running title of the paper with 40 letters or less in []
 %% and the full title of the paper in { }.

%\title{A paradigm for the characterization of artifacts in tomography}
%\title{Visible singularities and added artifacts in limited data tomography for the generalized Radon transform}
\title{Limited data problems for the generalized Radon transform in $\rn$}

% Place all authors' names in [ ] shown as running head, Leave { } empty
% Please use `and' to connect the last two names if applicable
% Use FirstNameInitial.  MiddleNameInitial. LastName, or only last names of authors if there are too many authors

\author{
	J\"urgen Frikel\thanks{Department of Applied Mathematics and Computer Science, Technical University of Denmark, Matematiktorvet 303, 2800 Kgs Lyngby, Denmark. Email:  \texttt{jyfr@dtu.dk}}~ and Eric Todd Quinto\thanks{Department of Mathematics, Tufts University, Medford, MA 02155, USA. Email: \texttt{todd.quinto@tufts.edu}}
}

\begin{document}
\maketitle

%\jc{I tried to come up with a shorter title that explicitly mentions
%  $\rn$. What do you think about the new title? Also, I thought that it
%  might be good not to mention added artifacts and visible singularities
%  too explicitly in order to avoid confusion with our older articles. If
%  you don't like, I'm ok with using the old title.}
%

%\jc{Should we put the preprint on arxiv?}
%
%\tc{I like the title and note only that some people don't like ``On'' at
%  the start of a title, so perhaps we could delete that.  What do you
%  think?  think we should put the preprint on Arxiv.  I'd also like to send
%  it to Katsevich once we put it on ArXiv (and perhaps after I finish the
%  article with Bernadette).}

% !TEX root = ./Paradigm_Artifacts.tex
% ======================================================
\begin{abstract}
We consider the generalized Radon transform (defined in terms of
smooth weight functions) on hyperplanes in $\rn$.
We analyze general filtered backprojection type reconstruction methods
for limited data with filters given by general pseudodifferential
operators. We provide microlocal characterizations of
visible and added singularities in $\rn$ and define modified versions of
reconstruction operators that do not generate added artifacts. We
calculate the symbol of our general reconstruction operators as
pseudodifferential operators, and provide conditions for the filters
under which the reconstruction operators are elliptic for the visible
singularities.  If the filters are chosen according to those
conditions, we show that almost all visible singularities can be
recovered reliably. Our work generalizes the results for the classical 
line transforms in $\rtwo$ and the classical reconstruction operators (that use specific filters).
In our proofs, we employ a general paradigm that is based 
on the calculus of Fourier integral operators. Since this technique 
does not rely on explicit expressions of the reconstruction operators,
it enables us to analyze  more general imaging situations.
\\medskip
\end{abstract}
\noindent \textbf{Keywords:} Radon transforms, Microlocal analysis, Computed tomography, Lambda tomography,
Limited angle tomography,  
Fourier integral operators  

\noindent \textbf{2010 AMS Subject Classifications.}  Primary: 42A12, 93C55,
35S30, Secondary: 65R10, 58J40

% !TEX root = ./Paradigm_Artifacts.tex
% ======================================================
\section{Introduction}
\label{sec:introduction}

%\tc{I liked the first paragraph a lot and slightly rearranged it (putting
%  sentence 1 of paragraph 2 as sentence 2 of paragraph 1 (smile) and I
%  described the measures without using normal vectors, etc.  I'm not sure
%  if it is  better.  Also, I add text to imply this could be relevant in
%  future applications, but I'm not sure it works.  What do you think? }

In this article, we analyze the limited data problem for the
generalized Radon transform integrating over hyperplanes in $\rn$
using microlocal analysis.  The integration along those hyperplanes is
performed with respect to some weight functions that might depend on
both the hyperplane and the point on the hyperplane. By considering
more general weight functions, we aim to analyze a wider class of
imaging applications, including emission tomography (such SPECT and
some models of PET) where the attenuated Radon transform is used to
model the measurement process, cf.\ \cite{Natterer86,Natterer01}, 
as well as possible future applications. Of course, our setup
also includes the results for the classical Radon transform with
constant weight.

 Our setup is as follows: 
Let $(\om,s)\in \Xi:=\snm\times \rr$, then we consider the hyperplanes
\bel{def:H} H(\om,s) = \sparen{x\in \rn\st x\cdot \om = s}\ee
perpendicular to $\om$ containing the point $s\om$, 
i.e., $ H(\om,s)$ is $s$ directed units from the origin (in the
direction of $\om$ if $s\geq 0$ and in the opposite direction if
$s<0$).  Let $\mu:\snm\times \rn\to \rr$ be a smooth nowhere zero
weight, then we define the generalized Radon transform 
\bel{def:Rmu-rn} \Rmu f(\om,s) = \int_{x\in H(\om,s)} f(x)
\mu(\om,x) \d x,\ee where $\d x$ is the Lebesgue measure on the hyperplane
$H(\om,s)$.  We also define a generalized  dual operator (or the backprojection 
operator) with respect to an arbitrary smooth weight $\nu=\nu(\om,x)$ as 
\bel{def:Rstn-rn}\Rstn g(x) = \int_{\om\in\snm} g(\om,x\cdot\om)\nu(\om,x)\d \om.\ee 
Note that this covers both standard cases when $\nu=\mu$ and so $\Rstn$ is the
adjoint operator $(\Rmu)^*$ and the case when $\nu=1/\mu$ which is considered
by some authors including Beylkin. We discuss these cases further in 
Remark \ref{remark:weights}. Moreover, we note that the above transforms are both defined and weakly
continuous for classes of distributions \cite{He:RT2011}.

%\jf{Let $s\in \rr$,
%$\phi\in[0,2\pi]$ and $\th(\phi) = (\cos(\phi),\sin(\phi))$ be the
%unit vector in $S^1$ in direction $\phi$ and $\thperp(\phi)=(-\sin
%(\phi),\cos(\phi))$, then $\thperp(\phi)$ is perpendicular to
%$\th(\phi)$.  Let $\Xi = [0,2\pi]\times \rr$, then for each
%$(\phi,s)\in \Xi$,  $L(\phi,s) = \sparen{x\in \rtwo\st
%x\cdot\th(\phi) = s}$ is the line containing $s\th(\phi)$ and normal
%to $\th(\phi)$.  We let $\mu(\phi,x)$ be a smooth function on
%$\rr\times \rtwo$ that is $2\pi-$periodic in $\phi$.  Then, we define
%the generalized Radon transform \bel{def:Rmu-rtwo} \Rmu f(\phi,s)
%=\int_{x\in L(\phi,s)}f(x)\mu(\phi,x)\,\d x \ee where $\d x$ denotes
%the arc length measure on the line.  This transform integrates
%functions along lines.}
%

%We define the corresponding dual transform (or the backprojection
%operator) for $g\in \Sc(\sxr)$ as \bel{def:Rstarnu-rtwo} \Rstn g(x)
%=\int_0^{2\pi} g(\phi,x\cdot\th(\phi))\nu(\phi,x)\d\phi, \ee which is
%the integral of $g$ over all lines through $x$ (since for each
%$\th(\phi)$, $x\in L(\phi,x\cdot \th(\phi))$).  Note that authors,
%including Beylkin and others, use the weight $1/\mu$ for a different
%weighted dual operators, and our results apply to those as well. These transforms are both defined and weakly
%continuous for classes of distributions \cite{He:RT2011}.  

Many inversion formulas have been proven for the classical Radon transform
($\mu\equiv 1$) \cite{Natterer86}, and invertibility of the generalized Radon transform $\Rmu$ has
been well studied (e.g., \cite{Beylkin, 
Novikov:att-inversion,Quinto1980}). Among the most prominent
reconstruction formulas are those of filtered backprojection type
\cite{Beylkin,KLM,Natterer86} which have the following
 form 
\begin{equation}
\label{eq:reconstruction operators}
	B g=\Rstn P g, %\ \text{ where }\  g=\Rmu f, 
\end{equation}
where $g=\Rmu f$ and $P$ is a general pseudodifferential operator that \enquote{filters} the
data $g=\Rmu f$. For example, in case of the classical Radon transform
the use of filter $P=1/2\cdot(2\pi)^{1-n}(-\partial^2/\partial s^2)^{\frac{n-1}{2}}$ in
\eqref{eq:reconstruction operators} leads to an exact reconstruction formula, $f=R^{\ast}PRf$,
which is the basis for the standard filtered backprojection (FBP) algorithm
\cite{Natterer86}. Another prominent example is the so-called Lambda
reconstruction formula (employed in local tomography) which uses the filter
$P=1/2\cdot(2\pi)^{1-n}(-\partial^2/\partial s^2)^{n/2}$ in \eqref{eq:reconstruction
operators} for $n$ is even. In contrast to the FBP reconstruction operator, 
the Lambda reconstruction operator is local  in even dimensions. 
However, when $n$ is odd, the FBP reconstruction operator is local
itself.

In classical imaging setups the FBP type reconstruction operators
\eqref{eq:reconstruction operators} are usually applied to full (complete)
data. As mentioned above, some of those filters even lead to exact
reconstructions if the data are complete. In this paper, we consider the
problem of reconstructing $f$ from incomplete data by using reconstructions
operators \eqref{eq:reconstruction operators} with general filters.  More
precisely, we assume that $\Rmu f$ is given only for directions $\om$ in a
closed subset $A\subset \snm$ with nontrivial interior.  Thus, we deal with
the restricted (or limited data) generalized Radon transform defined as
\[\RmuA: = \chiar \Rmu,\]  where $\chiar$ denotes the
characteristic function of the data space $A\times\R$ with the limited
angular range $A.$ Such limited data problems arise in many practical
situations and the filtered backprojection type reconstruction of the form
\eqref{eq:reconstruction operators} is still one of the preferred
reconstruction methods \cite{SidkyPanVannier2009} (where instead of the full
data $g=\Rmu f$, the limited data $g_A=\RmuA f$ is used for the
reconstruction). It is well known that the limited data reconstruction
problem is severely ill-posed \cite{Lo1986,Natterer86}. As a consequence,
only visible singularities can be reconstructed reliably \cite{Quinto93}
and additional artifacts can be generated,
cf. \cite{FrikelQuinto2013,Katsevich:1997}. In $\rtwo$, the geometry of
added artifacts has been precisely characterized in
\cite{FrikelQuinto2013,Katsevich:1997}.  In those articles, the authors
consider the classical limited angle FBP and Lambda reconstructions, i.e.,
$\mu=\nu\equiv 1$ and $P=1/(4\pi)\sqrt{-d^2/ds^2}$ for FBP and
$P=(1/4\pi)(-d^2/ds^2)$ for Lambda. In \cite{Katsevich:1997}, Katsevich
also considers general weights $\mu$ (and the dual transform w.r.t. weight
$1/\mu$) and the Lambda reconstruction operator.  In particular, the
authors of \cite{FrikelQuinto2013,Katsevich:1997} show that artifacts are
generated along straight lines that are tangent to singularities of $f$
whose directions correspond to the ends of the angular range. For the
classical Radon transform in $\rtwo$, the strength of added artifacts was
characterized by L.\ Nguyen in \cite{LNguyen1}.  In addition to
characterization of artifacts, the authors of \cite{FrikelQuinto2013,
  Katsevich:1997} show that the artifacts can be reduced by using modified
reconstruction operators. The same modified reconstruction operators are
considered in \cite{KLM} for $\Rmu$ with Lambda and FBP filters, and the
symbols are given for those specific operators for limited angle and ROI
data. In all of those cases, the calculation of the symbols relies on the
specific form of the filters.

This work is a generalization of the above mentioned results as it provides
a full characterization of visible singularities and added artifacts for
the restricted generalized hyperplane Radon transform in $\rn$ and for
reconstruction operators with general filters.  To prove these
characterizations we utilize a general paradigm that is based on the
calculus of Fourier integral operators and microlocal analysis.  This was
originally developed in \cite{FrikelQuintoPreprint, FrikelQuinto2014}, and
in \cite{FrikelQuinto2014} it was used to characterize artifacts in
photoacoustic tomography and sonar (see also \cite{Nguyen-spherical} for 
related results).
This methodology significantly different from techniques used in
\cite{FrikelQuinto2013,Katsevich:1997} (which rely on explicit expressions
of the reconstruction operators as singular pseudodifferential
operators). The flexibility of this approach allows us to prove
characterizations for reconstruction operators with general filters as well
as general weights.

In the case of $\rtwo$, our characterization (cf. Corollary
\ref{cor:characterization-rtwo}) of visible and added singularities
are in accordance with the results of \cite{FrikelQuinto2013,
Katsevich:1997}. However, our result is more general as it is valid
for reconstruction operators with general filters and weights, and it
provides conditions on filters and weights which guarantee the
recoverability of almost all visible singularities. In addition to
that, we also prove characterizations in the general case of the
hyperplane transform in $\rn$ in Theorem \ref{thm:characterization}.
To the best of our knowledge this is the first characterization of
added artifacts and visible singularities in $\rn$. In this paper, we
also define modified versions of the reconstruction operators
according to \cite{FrikelQuinto2013,FrikelQuinto2014,
Katsevich:1997,KLM} and prove that for general filters $P$ that they
do not add artifacts to the reconstruction (Theorem
\ref{thm:reduction1}).  Furthermore, we calculate the symbol of our
general reconstruction operators as pseudodifferential operators
(Theorem \ref{thm:reduction1}), and provide conditions for filters
under which the reconstructions operators are elliptic (Theorem
\ref{thm:symbol elliptic}). If the filters are chosen according to
those conditions, we show that classical as well as modified
reconstruction operators reliably recover almost all visible
singularities (Theorem \ref{thm:characterization} and
\ref{thm:reduction2}, respectively).

The article is organized as follows. Basic definitions and notations
are given in Section \ref{sec:notation}. In Section \ref{sec:paradigm} 
we present a general paradigm to characterize added singularities 
in limited data tomography. In Section \ref{sec:characterization-r2}
we first present the characterizations of the limited angle artifacts for the
generalized Radon transform in $\rtwo$. The generalization of these
results to $\rn$ is stated in Section \ref{sec:characterization-rn}, 
and the artifact reduction strategy as well as symbol calculations are 
given in Section \ref{sec:reduction}. The proofs are 
presented in the appendix.

%\jc{Decide whether to mention Linh's new article}

% !TEX root = ./Paradigm_Artifacts.tex
% ======================================================
\section{Notation}
\label{sec:notation}

%Our characterizations of visible singularities and artifacts will be presented within the framework of microlocal analysis. The proofs 
%will employ the calculus of Fourier integral operators. We therefore 
%fix the notation needed in the following, and briefly recall the central concepts. 

Let $\Om$ be an open set.  We denote the set of $C^\infty$ functions
with domain $\Om$, by $\Ec(\Om)$ and the set of $C^\infty$ functions
of compact support in $\Om$ by $\Dc(\Om)$.  Distributions are
continuous linear functionals on these function spaces.  The dual
space to $\Dc(\Om)$ is denoted $\Dc'(\Om)$ and the dual space to
$\Ec(\Om)$ is denoted $\Ec'(\Om)$. In fact, $\Ec'(\Om)$ is the set of
distributions of compact support in $\Om$.  For more information
about these spaces we refer to \cite{Rudin:FA}.

We will use the framework of microlocal analysis for our
characterizations. Here, the notion of a wavefront set of a
distribution $f\in\Dc'(\Om)$ is central. It simulta\-neously describes
the locations and directions of singularities of $f$. That is, $f$ has
a singularity at $x_0\in\Om$ in direction $\xio\in \rn\smo$ if for any
cutoff function $\vp$ at $x_0$, the Fourier transform $\Fc(\vp f)$
does not decay rapidly in any open conic neighborhood of the ray
$\{t\xio\st t>0\}$.  Then, the \emph{wavefront set} of
$f\in\Dc'(\Om)$, $\WF(f)$, is defined as the set of all tuples
$(x_0,\xio)$ such that $f$ is singular at $x_0$ in direction $\xio$.
As defined, $\WF(f)$, is a closed subset of $\rn\times(\rn\smo)$ that
is conic in the second variable. However, in what follows, we will
view the wavefront set as a subset of a cotangent bundle so it will be
invariantly defined on manifolds \cite{Treves:1980vf}.

%\begin{definition}[Wavefront Set
%{\cite{Hoermander03}}]\index{wavefront set}\label{def:wavefront set}
%Let $f\in\Dc'(\R^n)$, $\xo\in \rn$ and $\xio\in \rn\smo$.  Then \emph{$f$
%is microlocally smooth at $\xo$ in direction $\xio$} if there is a cutoff
%function $\vp$ (a smooth function of compact support for which
%$\vp(\xo)\neq 0$) and a conic neighborhood of $\xio$ such that $\Fc(\vp
%f)$ is rapidly decreasing at infinity in $V$.  The \emph{wavefront
%set} of $f$ is the set of all $(x,\xi\dx)\in T^*(\rn)\smo$ such that
%$f$ \emph{is not} microlocally smooth at $x$ in direction $\xi$.
%\end{definition}

%It is the complement of the largest open set on which $f$ is a $C^\infty$ function  

%A function $f(\xi)$ is said to \emph{decay rapidly at infinity in a
%conic open set $V$} if it decays faster than any power of
%$1/\norm{\xi}$ in $V$.  The singular support of a distribution, $f$,
%$\singsupp(f)$, is the complement of the largest open set on which $f$
%is a $C^\infty$ function.  It follows directly from this definition
%that $\singsupp(f)\subset\supp(f)$, and $\singsupp(f)=\emptyset$ if
%and only if $f\in C^\infty(\rn)$.

%To make the concept of singularity apply to our range of problems, we
%will need to view the wavefront set as a subset of a conormal bundle
%so it will be invariantly defined on manifolds \cite{Treves:1980vf}.

We recall that, for a manifold $\Xi$ and $y\in \Xi$, the cotangent
space of $\Xi$ at $y$, $T^*_y(\Xi)$ is the vector space of all first
order differentials (the dual vector space to the tangent space
$T_y(\Xi)$), and the cotangent bundle $T^*(\Xi)$ is the vector bundle
with fiber $T^*_y(\Xi)$ above $y\in \Xi$.  That is
$T^*(\Xi)=\sparen{(y,\eta)\st y\in \Xi, \eta\in T^*_y(\Xi)}$. The
differentials $\dx_1$, $\dx_2,\dots$, and $\dx_n$ are a basis of
$T^*_x(\rn)$ for any $x\in \rn$.  For $\xi\in \rn$, we will use the
notation \[\xi\dx = \xi_1\dx_1+\xi_2\dx_2+\cdots+\xi_n\dx_n\in
T^*_x(\rn).\] If $\phi\in \rr$ then $\dphi$ will be the differential
with respect to $\phi$, and differentials $\dr$ and $\ds$ are defined
analogously.  

For the Radon transform in $\rn$, we introduce some more notation.  For
$\om\in\snm$ we define \bel{def:P} \Pw:\rn\to H(\om,0),\quad \Pw(x) =
x-(x\cdot \om)\om.\ee So, $\Pw(x)$ is the orthogonal projection of $x$ onto
this hyperplane. Note that \[\Pw(x) = \pi_{-\om}(x)\ \ \forall x\in \rn.\]
Then, $\Pw(x)\d \om$ is the covector in $T^*_\om(\snm)$ dual to the vector
$\Pw(x)\in H(\om,0)$ (where we have identified this hyperplane with the
tangent space $T_\om(\snm)$).

Let $X$ and $Y$ be manifolds, and $C\subset T^\ast (Y)\times T^\ast (X)$,
then
\begin{equation}
	C^t = \sparen{(x,\xi;y,\eta)\st  (y,\eta;x,\xi)\in
	C}.\label{def:At}
\end{equation}
If $D\subset T^*(X) $, we define
\bel{def:circ}
	C\circ D = \sparen{(y,\eta)\in T^\ast(Y)\st  \exists (x,\xi)\in
	D\st (y,\eta;x,\xi)\in C}.
\ee
Furthermore, if $\Pi_L:C\to T^*(Y)$ and $\Pi_R:C\to T^*(X)$ are the
natural projections, then \bel{projection-circ}
C\circ D = \Pi_L\paren{\Pi_R\inv\paren{D}}.\ee
% and 
% \begin{multline}\label{circ rules}
% 	A\circ B = \{(x,\xi; x',\xi')\in T^\ast X\times T^\ast X\st  \\ \exists (y,\eta)\in T^\ast Y\st  (x,\xi; y,\eta)\in A\text{ and }( y,\eta; x',\xi')\in B\}.
% \end{multline}
% For later use, we note the following relations: For $A$, $B$, and $C$
% as above and $\tilde{C}\subset T^*(X)$
% \begin{equation}
% \label{eq:distributive and associative relations for compositions}
% 	B\circ \big(C\cup \tilde{C}\big) = \big(B\circ C\big) \cup
% 	\big(B\circ \tilde{C}\big),\qquad  A\circ \big(B\circ C\big) =\big(A\circ B \big)\circ C.
% \end{equation}

Fourier integral operators (FIO) are linear operators on distribution
spaces that precisely transform wavefront sets.  They are defined in
\cite{Ho1971, Treves:1980vf} in terms of amplitudes and phase
functions.  If $X$ and $\Xi$ are manifolds and  $\Fc:\Dc'(X)\to
\Dc'(\Xi)$ is a FIO, then associated to $\Fc$ is the \emph{canonical
relation}  $C\subset T^*(\Xi)\times T^*(X)$.  Then the
H\"ormander-Sato Lemma (e.g., 
\cite[Th.\ 5.4, p.\
461]{Treves:1980vf}) asserts for $f\in \Ec'(X)$ that 
\begin{equation}
\label{thm:HS}
	\WF(\Fc f)\subset C\circ\WF(f).
\end{equation}

% !TEX root = ./Paradigm_Artifacts.tex
% ======================================================
\section{The paradigm} \label{sec:paradigm}

% Note that a linear operator, $L:\Ec'(\Om)\to \Dc'(Y)$ is \emph{properly
% supported} when the following holds: if $S$ is the support of the
% Schwartz kernel of $L$, then the projections from $S$ to $X$ and to
% $Y$ are compact maps (i.e., the inverse image of any compact set is
% compact).  This implies that $L:\Ec'(X)\to \Ec'(Y)$.

In this section, we will present a methodology that can be used to
prove characterizations of visible singularities and limited view artifacts for a number of
tomography problems.  In the next section, we will apply them to
$\Rmu$.

This methodology was originally developed in \cite{FrikelQuintoPreprint,FrikelQuinto2014}
and in \cite{FrikelQuinto2014} it was used
to understand visible and added singularities in limited data
photoacoustic tomography and sonar.  Denote the forward operator by
$\Mc:\Ec'(\Omega)\to\Ec'(\Xi)$ and assume $\Mc$ is a FIO.  The
\emph{object space} $\Omega$ is a region to be imaged and the
\emph{data space} $\Xi$ is a space that parameterizes the data.  A
\emph{limited data problem} for $\Mc$ will be a specification of a
closed subset $B\subset \Xi$ on which data are given, and in this case,
the limited data operator can be written \bel{MB} \MB f = \chib
\Mc,\ee where $\chib$ is the characteristic function of $B$ and the
product just restricts the data to the set $B$.  In the cases we
consider, the reconstruction operator is of the form
\bel{MB-reconstruction}\Mst P\MB,\ee where $\Mst$ is an appropriate
dual or backprojection operator to $\Mc$, and this models our
reconstruction operator \eqref{eq:reconstruction operators}.

 Our next theorem tells what
multiplication by $\chib$ does to the wavefront set.  It is a special case
of Theorem 8.2.10 in \cite{Hoermander03}.

 \begin{theorem}\label{thm:WF mult} Let $u$ be a distribution and let
$B$ be a closed subset of $\,\Xi$ with nontrivial interior.  If the
\emph{non-cancellation condition} 
\bel{non-cancellation}
%\text{\rm\
\forall\,(y,\xi)\in \WF(u),\; (y,-\xi)\notin \WF(\chia) \ee holds,
then the product $\chia u$ can be defined as a distribution.  In this
case, we have \bel{WF of a product}\WF(\chib u) \subset
\Qc(B,\WF(u)),\ee where for $W\subset T^*(\Xi)$
\bel{def:Q}\begin{aligned}\Qc(B,W) :=& \big\{(y,\xi+\eta)\st y\in
B\,, \bparen{(y,\xi)\in W\text{\rm\ or } \xi = 0}\\&\qquad
\text{\rm\ and } \big[(y,\eta)\in \WF(\chib)\text{\rm\ or } \eta =
0\big]\big\}\,.\end{aligned}\ee
\end{theorem}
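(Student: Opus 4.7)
The plan is to derive this result as a direct specialization of H\"ormander's product theorem for distributions (\cite[Theorem 8.2.10]{Hoermander03}), taking the second factor to be the characteristic function $\chi_B$. First I would note that $\chi_B$, being bounded and measurable, is well defined as a distribution on $\Xi$, and since it is locally constant (either $0$ or $1$) off of $\bd(B)$, its wavefront set is concentrated above $\bd(B)$.

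Next I would apply H\"ormander's theorem to the pair $(u,\chi_B)$. The non-cancellation condition \eqref{non-cancellation} is precisely the hypothesis required there to form the product $\chi_B\cdot u$ as a distribution: no covector $(y,\xi)\in\WF(u)$ has its negative $(y,-\xi)$ in $\WF(\chi_B)$. Under this hypothesis the theorem immediately gives the wavefront set bound
\[\WF(\chi_B u)\subset \bigl\{(y,\xi+\eta)\st [(y,\xi)\in\WF(u)\text{ or }\xi=0]\ \text{and}\ [(y,\eta)\in\WF(\chi_B)\text{ or }\eta=0]\bigr\},\]
with the understanding that zero covectors are excluded, as is automatic for any wavefront set.

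Finally, to obtain the extra constraint $y\in B$ appearing in $\mathcal{Q}(B,\WF(u))$ but not in the raw H\"ormander bound, I would invoke the support property: since $B$ is closed, $\supp(\chi_B)\subset B$, hence $\supp(\chi_B u)\subset B$, so every covector in $\WF(\chi_B u)$ has base point in $B$. Intersecting the two inclusions yields exactly \eqref{WF of a product}. The main obstacle, if any, is purely bookkeeping: verifying that the notion of product used in our setting agrees with that of \cite[Chapter~8]{Hoermander03} and that the edge cases (zero covectors and base points on $\bd(B)$) are tracked correctly; no genuinely new microlocal content is needed beyond H\"ormander's theorem.
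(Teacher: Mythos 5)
Your proposal is correct and matches the paper's treatment: the paper likewise obtains the statement as a direct specialization of H\"ormander's Theorem 8.2.10 to the pair $(u,\chi_B)$, with the non-cancellation condition serving as the hypothesis for defining the product. The extra constraint $y\in B$ is justified in the paper by the same observation you make (phrased as $\chi_B$ being identically zero, hence smooth, on the open complement of the closed set $B$), and the exclusion of the zero covector is noted as automatic.
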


Note that the condition ``$y\in B$'' is not in \eqref{def:Q} in
H\"{o}rmander's theorem, but we  include this  because
$\chib$ is zero (and so smooth) off of $B$ (this is why we assume
$B$
is closed, so that its complement is open).  Also, note that the case
$\xi=\eta = 0$ in the definition of $\Qc$ is not allowed since the
wavefront set does not include the zero covector.

%Here is the outline of our paradigm from \cite{FrikelQuinto2014}. 

Our paradigm for proving characterizations for visible and added artifacts is given by the following procedure, cf. \cite{FrikelQuinto2014}:
\begin{enumerate}[(a)]%[label*=(\alph*)]

\item\label{paradigm:step1} Confirm the forward operator $\Mc$ is a
FIO and calculate its canonical relation, $C$.

\item\label{paradigm:step2} Choose the limited data set $B\subset
\Xi$ and calculate $\WF(\chib)$.

\item\label{paradigm:step3} Make sure the non-cancellation condition
\eqref{non-cancellation} holds for $\chib$ and $\Mc f$.  This can be
done in general by making sure it holds for $(y,\xi)\in C\circ
\paren{T^*(\Om)\smo}$. 

\item\label{paradigm:step4} Calculate $\Qc(B,C\circ \WF(f))$.

\item\label{paradigm:step5} Calculate $C^t\circ\Qc\paren{B,
C\circ\WF(f)}$ to find possible visible singularities and added
artifacts using \cite[Lemma 3.2]{FrikelQuinto2014}: \bel{final WF
containment}\WF(\Mst P \MB f)\subset C^t\circ\Qc\paren{B,
C\circ\WF(f)}.\ee
\end{enumerate}

The paradigm does not provide a lower bound for $\WF(\Mst P \MB f)$ in
terms of $\WF(f)$, but we will analyze the operators more completely
to provide such a bound.
% !TEX root = ./Paradigm_Artifacts.tex
% ======================================================

\section{Characterization of visible singularities\\ and added
artifacts in $\rtwo$}
\label{sec:characterization-r2} 

In this section, we present a characterization for the line transform
$\rtwo$ which is a special case of the more general results for the
hyperplane transform in $\rn$ that will be stated in the next section and
proven in the appendix. The statement is simpler in $\rtwo$. Here, we use a
slightly different (more convenient) notation and discuss some
implications. The presented characterization generalize the results of
\cite{FrikelQuinto2013,Katsevich:1997} where the authors consider specific
filters.

%In this section we state the results for $\rtwo$ since its easier and 
%this is the mostly considered case in the literature ...
%This will generalize results of \cite{FrikelQuinto2013,Katsevich:1997} 
%where the authors consider specific filters and where the proofs are
%based on explicit knowledge of the reconstruction operators. In the case
%considered here, those methods cannot be used to prove
%characterizations of visible singularities and added artifacts for the generalized 
%Radon transform with general filters. We therefore employ the paradigm introduced in the previous
%section. Since we will state the is for the hyperplane transform on
%$\rn$, we will introduce appropriate notation.

%
%analyze limited angle reconstructions using
%filtered backprojection operators with general filters. As outlined in
%the introduction, the methods of
%\cite{FrikelQuinto2013,Katsevich:1997} cannot be used to prove
%characterizations of visible singularities and added artifacts in this
%situation. We therefore employ the paradigm introduced in the previous
%section. Since we will state the is for the hyperplane transform on
%$\rn$, we will introduce appropriate notation.

\subsection{The setup in $\rtwo$}
\label{subsec:notation-rtwo}
To make the presentation simpler, we will parametrize a line in $\rtwo$ 
in terms of an angle and a signed distance to the origin, as opposed to 
the parametrization \eqref{def:H}. To that end, we let $s\in \rr$,
$\phi\in[0,2\pi]$ and $\th(\phi) = (\cos(\phi),\sin(\phi))$ be the
unit vector in $S^1$ in direction $\phi$ and $\thperp(\phi)=(-\sin
(\phi),\cos(\phi))$, then $\thperp(\phi)$ is perpendicular to
$\th(\phi)$.  Let $\Xi = [0,2\pi]\times \rr$, then for each
$(\phi,s)\in \Xi$,  
\[L(\phi,s) = \sparen{x\in \rtwo\st x\cdot\th(\phi) = s}\] 
is the line containing $s\th(\phi)$ and normal
to $\th(\phi)$.  We let $\mu(\phi,x)$ be a smooth function on
$\rr\times \rtwo$ that is $2\pi-$periodic in $\phi$.  Then, the generalized 
Radon transform can be written as
\bel{def:Rmu-rtwo} \Rmu f(\phi,s) =\int_{x\in L(\phi,s)}f(x)\mu(\phi,x)\,\d x ,\ee 
where $\d x$ denotes the arc length measure on the line. This transform integrates
functions along lines. The corresponding dual transform (or the backprojection
operator) for $g\in \Sc(\sxr)$ and a smooth weight $\nu(\phi,x)$ then takes the form 
\bel{def:Rstarnu-rtwo} \Rstn g(x) =\int_0^{2\pi} g(\phi,x\cdot\th(\phi))\nu(\phi,x)\d\phi, \ee 
which is the integral of $g$ over all lines through $x$ (since for each
$\th(\phi)$, $x\in L(\phi,x\cdot \th(\phi))$). %Note that authors,
%including Beylkin and others, use the weight $1/\mu$ for a different
%weighted dual operators, and our results apply to those as well. 
As noted in the introduction, these transforms are both defined and weakly
continuous for classes of distributions \cite{He:RT2011}.  

%We consider the limited angle problem of reconstructing $f$ from
%incomplete data by using reconstructions operators 
%\eqref{eq:reconstruction operators} with general filters. 
We consider the limited angle problem, i.e., we consider the data space 
of the form $[a,b]\times\R$ with the limited angular range $[a,b]$
where $b-a<\pi$ (or $b-a<2\pi$ if $\mu$ is not symmetric). 
Note that for $b-a\geq \pi$, every line can be parameterized by $\phi\in (a,b)$ 
although for general $\mu$, the weight might be different on the line
$L(\phi,s)$ and $L(\phi+\pi,-s)$: $\mu(\phi,x)$ might not equal
$\mu(\phi+\pi,x)$ for all $(\phi,x)$. Thus, we deal with the restricted
(or limited angle) generalized Radon transform which we define as 
\begin{equation}
\label{eq:restricted Radon transform}
	\Rmuab f(\phi,s) = \chi_{[a,b]\times\R}(\phi,s)\cdot\Rmu f(\phi,s),
\end{equation}
where $\chi_{[a,b]\times\R}$ denotes the characteristic function of
$[a,b]\times\R$. %with the limited angular range $[a,b]$
%where $b-a<\pi$ (or $b-a<2\pi$ if $\mu$ is not symmetric). 

\begin{remark}
	In order to make clear how the characterization in $\rtwo$
follows from the more general statement in Theorem
\ref{thm:characterization}, we would like to point out that (in the
general statement) all terms involving $\omega\in S^{n-1}$ can be
stated in $\rtwo$ using the parametrization of $\om=\theta(\phi)\in
S^{1}$ with respect to the angle $\phi\in[0,2\pi]$ (see section
\ref{subsec:notation-rtwo}).  Hence, all terms that are generally
formulated with respect to $\d \om$ will be stated with respect to
$\d\phi$.  For example, the projection $\pi_{\omega}$ (cf.
\eqref{def:P}) simplifies to
$\pi_{\theta(\phi)}(x)=(x\cdot\thperp(\phi))\thperp(\phi)$ and, hence,
$\Pw(x)\d \om$ corresponds to $x\cdot\thperp(\phi)\d \phi$.
\end{remark}

\subsection{The characterization}

The corollary presented below is a special case of Theorem
\ref{thm:characterization} that is given in Section
\ref{sec:characterization-rn} and proven in the appendix.  The
statement is simpler in $\rtwo$ and the characterizations are in
accordance with the results in \cite{FrikelQuinto2013,Katsevich:1997}.
However, the following result is more general than the results of
\cite{FrikelQuinto2013,Katsevich:1997} as it is valid for
reconstruction operators with general filters and weights, and it
provides a lower bound \eqref{elliptic equality-rtwo} under an
ellipticity assumption.

%will  generalizes the characterizations of visible
%singularities and added artifacts for two-dimensional X-ray tomography
%of \cite{FrikelQuinto2013} to arbitrary dimensions and the generalized
%Radon hyperplane transform, $\Rmu$, and arbitrary filters $P$. The
%theorem also generalizes the artifact characterization for $\rtwo$ in
%\cite{Katsevich:1997} to arbitrary filters and to $\rn$ and provides a
%lower bound \eqref{elliptic equality} under an ellipticity assumption.

%\jc{Should we call the following result Corollary although it comes before the actual theorem?}
%
%\tc{we should edit this corollary after we figure how to deal with the
%$\rtwo$ case.}
%
%\jc{Should we mention assumptions on the weight $\nu$ in following
%corollary?} \tc{Sorry, I forgot them, and they are important. Thanks!
%\newline as you suggested in a commented-out comment, I added
%definitions for $\rtwo$ since readers will not have seen the general
%definitions.}

To state the result in $\rtwo$, for $A\subset
\otp$, we define
\bel{def:VrA-rtwo} \VrA =\sparen{(\phi,s,\alpha[-z\d \phi + \d s])\st
\phi\in A, s\in \rr, z\in \rr, \alpha \neq 0}\ee
and let
\bel{def:VA-rtwo} \VA = \{(x,\alpha\th(\phi)\d x):
x\in \rtwo,\ \alpha\neq 0,\ \phi\in A\}. \ee
Now, for $f\in
\Ec'(\rn)$, define
\begin{equation}\label{def:WFA-rtwo} \WF_{A}(f)=\WF(f)\cap \VA.
\end{equation}
Similar definitions will be given for the transform in  $\rn$, and they
will reflect the notation used in that general case.

\begin{corollary}
\label{cor:characterization-rtwo} Let $\mu$ and $\nu$ be smooth
$2\pi$-periodic functions on $\rr\times \rtwo$. Let $P$ be a
pseudodifferential operator on $\Ec'(\Xi)$.  Let $f\in \Ec'(\rtwo)$.
Our limited data reconstruction operator is \[\Lab f = \Rstn P \Rmuab
f = \Rstn P \chiab \Rmu f.\] Then, 
\begin{equation} \label{equn:characterization-rtwo}
\WF(\Lab f)\subset \WF_{[a,b]}(f)\cup\Ac_{\{a,b\}}(f),
\end{equation} 
where $\WF_{[a,b]}(f)$ is defined according to \eqref{def:WFA-rtwo} and  
 \bel{def:A-rtwo}\begin{aligned} 
\Ac_{\{a,b\}}(f) = \{(x+t\th^\bot(\phi),&\alpha\th(\phi)\d x)\st \phi\in\sparen{a,b},\\ &
\alpha,t\neq0, (x,\alpha\th(\phi))\in\WF(f)\}
\end{aligned}\ee is the set of possible added artifacts.

Now, assume that $\mu$ and $\nu$ are nowhere zero and $b-a<\pi$.
Assume the top order symbol of $P$ is elliptic on $\Vrab$ defined by
\eqref{def:VrA-rtwo}.  Then, \bel{elliptic equality-rtwo} \WF_{(a,b)}(f)=
\WFab(\Lab f).\ee
\end{corollary}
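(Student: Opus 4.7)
The plan is to apply the paradigm of Section \ref{sec:paradigm}, taking $\Mc=\Rmu$ and $B=[a,b]\times\R$. For Step \eqref{paradigm:step1}, I would write $\Rmu$ as a FIO of order $-1/2$ with phase function $\Phi(\phi,s,x;\sigma)=\sigma(x\cdot\th(\phi)-s)$; computing $d_{\phi,s}\Phi$ and $d_x\Phi$ on the critical set of $d_\sigma\Phi$ yields
\[
C=\left\{\bigl((\phi,\,x\cdot\th(\phi),\,\alpha[-(x\cdot\thperp(\phi))\dphi+\ds]);\,(x,\alpha\th(\phi)\dx)\bigr):x\in\rtwo,\,\phi\in\R,\,\alpha\neq 0\right\}.
\]
For Step \eqref{paradigm:step2}, the set $[a,b]\times\R$ has smooth boundary $\{a,b\}\times\R$, so $\WF(\chiab)=\{(\phi,s,\beta\dphi):\phi\in\{a,b\},\,s\in\R,\,\beta\neq 0\}$. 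Any covector in $C\circ\WF(f)$ has nonzero $\ds$-component (namely $\alpha$), whereas $\WF(\chiab)$ contains only $\dphi$-covectors, so the non-cancellation condition \eqref{non-cancellation} of Step \eqref{paradigm:step3} is automatic and Theorem \ref{thm:WF mult} applies.

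Next I would compute $\Qc([a,b]\times\R,\,C\circ\WF(f))$ via \eqref{def:Q}, splitting the result into three families of covectors at points $(\phi,s)\in[a,b]\times\R$: (i) pure $C\circ\WF(f)$ covectors (any $\phi\in[a,b]$); (ii) pure $\WF(\chiab)$ covectors (only $\phi\in\{a,b\}$); (iii) sums of the two (only $\phi\in\{a,b\}$). Applying $C^t$ through \eqref{final WF containment}, family (ii) drops out because its covectors have no $\ds$-component and thus lie outside the range of the left projection of $C$; family (i) inverts under $C^t\circ C$ back to $\WF_{[a,b]}(f)$ by the Bolker-type injectivity of $C$ on the directions $\th(\phi)$ with $\phi\in[a,b]$; family (iii) produces, for each $(x_0,\alpha\th(\phi)\dx)\in\WF(f)$ with $\phi\in\{a,b\}$ and each $\beta\neq 0$, a wavefront element at $(x_0-(\beta/\alpha)\thperp(\phi),\,-\alpha\th(\phi)\dx)$; setting $t=-\beta/\alpha\neq 0$ identifies this with $\Ac_{\{a,b\}}(f)$. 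This proves \eqref{equn:characterization-rtwo}.

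For the ellipticity claim \eqref{elliptic equality-rtwo}, the inclusion $\WFab(\Lab f)\subset\WF_{(a,b)}(f)$ follows from \eqref{equn:characterization-rtwo} by intersecting with $\Vab$: the covector directions of $\Ac_{\{a,b\}}(f)$ are $\th(a)$ or $\th(b)$ and, since $b-a<\pi$, neither lies in the direction set of $\Vab$. For the reverse inclusion, fix $(x_0,\xio)=(x_0,\alpha\th(\phi_0)\dx)\in\WF_{(a,b)}(f)$; since $b-a<\pi$, the angle $\phi_0\in(a,b)$ is uniquely determined. Choose $\psi\in\Dc((a,b))$ with $\psi\equiv 1$ in a neighborhood of $\phi_0$ and decompose $\chiab=\psi+(\chiab-\psi)$ as multipliers in data space. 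Then $\Rstn P\psi\Rmu$ is a standard PsiDO on $\rtwo$, and Theorems \ref{thm:reduction1} and \ref{thm:symbol elliptic} express its principal symbol at $(x_0,\xio)$ as a nonzero multiple of $\mu(\phi_0,x_0)\nu(\phi_0,x_0)\psi(\phi_0)$ times the principal symbol of $P$; the hypotheses on $\mu,\nu,P$ make this nonzero, so $(x_0,\xio)\in\WF(\Rstn P\psi\Rmu f)$. Meanwhile, $(\chiab-\psi)\Rmu f$ is smooth near $(\phi_0,x_0\cdot\th(\phi_0))$, and the wavefront contributions from $\chiab-\psi$ at $\phi=a,b$ backproject through $\Rstn$ only to covector directions $\th(a),\th(b)\neq\th(\phi_0)$; hence $(x_0,\xio)\notin\WF(\Rstn P(\chiab-\psi)\Rmu f)$, and additivity gives $(x_0,\xio)\in\WF(\Lab f)$. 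The main obstacle is the PsiDO symbol calculation and the verification of the Bolker condition for $\Rstn P\psi\Rmu$; once those are supplied by Theorems \ref{thm:reduction1} and \ref{thm:symbol elliptic}, the remaining steps are direct applications of the paradigm and the wavefront set calculus.
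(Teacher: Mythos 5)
Your argument is correct and is essentially the paper's own: the corollary is obtained there as the $n=2$ specialization of Theorem \ref{thm:characterization}, whose appendix proof runs exactly the five-step paradigm you use (Theorem \ref{thm:WF mult} for $\WF(\chiab)$, the H\"ormander--Sato containment, $C^t\circ C=\Delta$ via Bolker, and Theorems \ref{thm:reduction1} and \ref{thm:symbol elliptic} for the elliptic lower bound via the decomposition $\chiab=\psi+(\chiab-\psi)$). One small slip to fix: the summed covector $\alpha[-(x_0\cdot\thperp(\phi))\dphi+\ds]+\beta\dphi$ has unchanged $\ds$-component $\alpha$, so its image under $C^t$ is $\bigl(x_0-(\beta/\alpha)\thperp(\phi),\,\alpha\th(\phi)\dx\bigr)$ rather than $-\alpha\th(\phi)\dx$; with that sign corrected the identification with $\Ac_{\{a,b\}}(f)$ is exact.
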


The condition $b-a<\pi$ is reasonable in limited data
problems because, if $b-a> \pi$, then every line can be
parameterized by $L(\phi,s)$ for some $\phi\in (a,b)$ which amounts to
the full data problem.  

Corollary \ref{cor:characterization-rtwo} provides an upper bound for
the singularities of limited angle reconstructions: It is given as a
union of the visible wavefront set of $f$, $\WF_{[a,b]} f$
(singularities that actually belong to $f$), and the set of possibly
added artifacts $\Ac_{\{a,b\}}(f)$ (singularities that might be
artificially created by the reconstruction operators). In particular,
it shows that any singularity of $f$ with direction outside the given
angular range $[a,b]$ is smoothed by the limited angle reconstruction
operator $\Lab$. Those singularities cannot be reconstructed (are
invisible). The corollary also provides a precise geometric
description of possibly added artifacts. It shows that artifacts are
generated along straight lines that are normal to singularities of $f$
(e.g., tangent to boundaries of regions) whose directions correspond
to the ends of the angular range $\{a,b\}$. In other words, any
singularity of $f$ with direction $\theta(a)$ or $\theta(b)$,
generates added singularities along the line $L(a,x\cdot\theta(a))$ or
$L(b,x\cdot\theta(b))$, respectively.

Moreover, Corollary \ref{cor:characterization-rtwo} provides a
lower bound in form of the equation \eqref{elliptic equality-rtwo}
under an ellipticity assumption.  In particular, the equality
\eqref{elliptic equality-rtwo} guarantees that almost all visible
singularities will be reconstructed if the filters are chosen
appropriately. This statement is formulated for the case $b-a<\pi$
(limited angular range).

\section{Generalizations to the Radon transform in $\rn$}
\label{sec:characterization-rn}

In this section, we present characterizations of visible singularities
and added artifacts for the restricted generalized Radon (hyperplane)
transform in $\rn$.  We analyze filtered backprojection type operators
with general filters and derive conditions for filters which guarantee
the recoverability of most of the visible singularities. Our results
generalize the characterizations given in
\cite{FrikelQuinto2013,Katsevich:1997}. We employ the paradigm
introduced in Section \ref{sec:paradigm}. In what follows, we use the
notation introduced in Section \ref{sec:introduction} and Section \ref{sec:notation}.

%As outlined in the introduction, the methods of
%\cite{FrikelQuinto2013,Katsevich:1997} cannot be used to prove
%characterizations of visible singularities and added artifacts in this
%situation. We therefore employ the paradigm introduced in Section \ref{sec:paradigm}. 
%Since we will state the is for the hyperplane transform on $\rn$, we will 
%introduce appropriate notation.

\subsection{Basic properties of $\Rmu$}

%\tc{we might put this in the intro.} Hyperplanes are parameterized as
%follows: \bel{def:H} H(\om,s) = \sparen{x\in \rn\st x\cdot \om = s}\ \
%(\om,s)\in \Xi:=\snm\times \rr,\ee and $H(\om,s)$ is the hyperplane
%perpendicular to $\om$ and $s$ directed units from the origin (in the
%direction of $\om$ if $s\geq 0$ and in the opposite direction if
%$s<0$).  Let $\mu:\snm\times \rn\to \rr$ be a smooth nowhere zero
%weight, then \bel{def:Rmu-rn} \Rmu f(\om,s) = \int_{x\in H(\om,s)} f(x)
%\mu(\om,x) \d x\ee where $\d x$ is Lebesgue measure on the hyperplane
%$H(\om,s)$.  We define a generalized  dual operator with arbitrary
%smooth weight $\nu=\nu(\om,x)$:  \bel{def:Rstn-rn}\Rstn g(x) =
%\int_{\om\in\snm} g(\om,x\cdot\om)\nu(\om,x)\d \om,\ee and note that
%this covers both standard cases when $\nu=\mu$ and so $\Rstn$ is the
%adjoint operator $(\Rmu)^*$ and the case when $\nu=1/\mu$, which we
%discuss further in Remark \ref{remark:weights}. 

The first proposition provides the canonical relation of $\Rmu$ and
$\Rstn$ (by transpose), and this determines their microlocal
properties.

\begin{proposition}
\label{prop:canonical relations} If $\mu$ is smooth weight on
$\snm\times \rn$, then the generalized Radon transform $\Rmu$ is a
 Fourier integral operator associated to the canonical
relation \begin{multline} \label{def:C} C = \{
((\om,s),\alpha\bparen{-\Pw(x)\d\om+\d s}; x,\alpha\om\d x)\st \\
		 \om\in \snm,\alpha\neq 0, x\cdot \om = s
\}, \end{multline} and $(\om,x, \alpha)$ give coordinates on $C$
because $s=\om\cdot x$.  If $\mu$ is nowhere zero, then $\Rmu$ is
elliptic.

 If $\nu$ is smooth, then the backprojection operator $\Rstn$ is a
Fourier integral operator associated to the canonical relation $C^t$
defined in \eqref{def:At}, and if $\nu$ is nowhere zero, then $\Rstn$
is elliptic.  

Let $\Pi_R:C\to T^*(\rn)$ and $\Pi_L:C\to T^*(\Xi)$ be the natural
projections. Then $\Pi_L$ is an injective immersion and $\Pi_R$ is a
two-to-one immersion.  Let $(x,\xi\dx)\in T^*(\rn)\smo$. 
Define
\bel{def:lambdas}\begin{aligned}
\om(\xi)&=\xi/\norm{\xi}\in \snm\\
 \lamo(x,\xi) &=
\paren{\om(\xi),x\cdot \om(\xi), \norm{\xi}\bparen{-\Pw(x)\d \om + \d s}}\\
\lamon(x,\xi) &= \paren{-\om(\xi),-x\cdot \om(\xi),
-\norm{\xi}\bparen{-\Pw(x)\d\om + \d
s}}\end{aligned}\ee
where $\Pw$ is defined by \eqref{def:P}.

The two preimages of $(x,\xi\dx)$ under $\Pi_R$ are
\[(\lamo(x,\xi);x,\xi\dx)\ \ \text{ and }\ \
(\lamon(x,\xi);x,\xi\dx). \]
Therefore, \bel{lambdas}\begin{aligned}C\circ\sparen{(x,\xi\dx)}&=
\sparen{\lamo(x,\xi),\lamon(x,\xi)}\\
C^t\circ\sparen{\lamo(x,\xi\dx)}&=
C^t\circ\sparen{\lamon(x,\xi)}=\sparen{(x,\xi\dx)}.\end{aligned}\ee
\end{proposition}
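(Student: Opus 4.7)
The plan is to realize $\Rmu$ as an oscillatory integral, read off its canonical relation and ellipticity from the phase and amplitude, deduce the statement for $\Rstn$ by duality, and then verify the claims about $\Pi_L$ and $\Pi_R$ directly from the coordinate description of $C$.

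For the first step, using the Fourier representation of the delta distribution on $H(\om,s)$, I would write
\[
\Rmu f(\om,s) = \frac{1}{2\pi}\int_{\rr}\int_{\rn} e^{i\alpha(x\cdot\om - s)}\,\mu(\om,x)\,f(x)\,\dx\,\d\alpha,
\]
which displays $\Rmu$ as a Fourier integral operator with nondegenerate phase $\Phi(\om,s,x;\alpha) = \alpha(x\cdot\om - s)$ (homogeneous of degree one in $\alpha$) and smooth amplitude $\mu/(2\pi)$, elliptic on the critical set $\{x\cdot\om = s\}$ precisely when $\mu$ is nowhere zero. The canonical relation is then obtained by differentiating $\Phi$: on $S^{n-1}$ the cotangent $d_\om(x\cdot\om)$ is the restriction to $T_\om S^{n-1} = \om^\perp$ of the linear functional $x$, so it equals $\pi_\om(x)\,\d\om$, while $d_s\Phi = -\alpha$ and $d_x\Phi = \alpha\om\,\dx$. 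Applying the standard symplectic twist (together with the harmless change $\alpha\mapsto -\alpha$) yields exactly the set $C$ in \eqref{def:C}, with $s$ eliminated via $s = x\cdot\om$ so that $(\om,x,\alpha)$ serve as coordinates on $C$.

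For $\Rstn$, a Fubini/coarea computation based on $\int_{\rn} h\,\dx = \int_{\rr}\int_{H(\om,s)} h\,\d x\,\d s$ shows that $\Rstn = (R_\nu)^*$, so $\Rstn$ is a Fourier integral operator whose canonical relation is the transpose of that of $R_\nu$, namely $C^t$; ellipticity when $\nu$ is nowhere zero is inherited. For the projections, in the coordinates $(\om,x,\alpha)$, the map $\Pi_L$ admits an explicit smooth inverse on its image: from $(\om,s,\beta\,\d s + v\,\d\om)$ with $v\in T^*_\om S^{n-1}$ one reads off $\alpha = \beta$ and $x = s\om - v/\alpha$, so $\Pi_L$ is a diffeomorphism onto its image and hence an injective immersion. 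For $\Pi_R$, solving $\alpha\om = \xi$ subject to $\om\in S^{n-1}$, $\alpha\neq 0$ gives precisely the two solutions $(\om(\xi),\|\xi\|)$ and $(-\om(\xi),-\|\xi\|)$; substituting into the description of $C$ and using $\pi_{-\om} = \pi_\om$ reproduces $\lamo(x,\xi)$ and $\lamon(x,\xi)$ from \eqref{def:lambdas}, whence both identities in \eqref{lambdas} follow. A short computation confirms that $\Pi_R$ is an immersion: a tangent vector to $C$ in these coordinates is a triple $(v,u,\beta)$ with $v\in T_\om S^{n-1}$, and the kernel condition $u = 0$, $\beta\om + \alpha v = 0$ forces $\beta = 0$ (by pairing with $\om$) and then $v = 0$.

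The main obstacle I expect is bookkeeping rather than conceptual: consistently tracking the sign conventions in the symplectic twist that defines the canonical relation, and correctly identifying $d_\om(x\cdot\om)$ as $\pi_\om(x)\,\d\om$ via the tangent-cotangent identification on the sphere. Once those conventions are pinned down and one exploits the $\alpha\leftrightarrow-\alpha$ symmetry in the parameterization of $C$, each remaining verification reduces to a one-line calculation.
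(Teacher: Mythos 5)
Your proof is correct and follows essentially the same route as the paper, which simply cites \cite{GS1977, Quinto1980} for the computation of $C$ and the Bolker condition and \cite{Ho1971} for the duality argument; your oscillatory-integral representation with phase $\alpha(x\cdot\om-s)$ is exactly the standard calculation those references carry out, and your two-preimage computation for $\Pi_R$ matches the paper's. The added detail on the sign conventions, the identification of $d_\om(x\cdot\om)$ with $\pi_\om(x)\,\d\om$, and the explicit inverse of $\Pi_L$ is a correct filling-in of what the paper leaves to the literature.
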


\begin{proof}  The calculation of $C$ is well known, see e.g., 
 \cite{GS1977, Quinto1980}.  The generalized hyperplane transform
$R_\nu$ has the same canonical relation as $\Rmu$ since the weight
does not affect the canonical relation, only the symbol.  Since
$\Rstn$ is the dual of $R_\nu$, it is an FIO associated to $C^t$ by
the standard calculus of FIO, e.g., \cite[Theorem 4.2.1]{Ho1971}.
That $\Pi_L:C\to T^*(\Xi)\smo$ is an injective immersion (The Bolker
Assumption) is a straightforward calculation \cite{GS1977,
Quinto1980}.

One uses \eqref{def:C} to find the two preimages of $(x,\xi\dx)$ under
$\Pi_R:C\to T^*(\rn)\smo$ using the fact that $\xi =
\norm{\xi}\om(\xi)=-\norm{\xi}(-\om(\xi))$.  Statement \eqref{lambdas}
follows from the observation that, if $A\subset T^*(\rn)$, then $C\circ
A = \Pi_L\paren{\Pi_R\inv (A)}$, and if $B\subset T^*(\Xi)$, then
\eqref{projection-circ} can be used to show that $C^t\circ B =
\Pi_R\paren{\Pi_L\inv (B)}$ (where $\Pi_R$ and $\Pi_L$ are the maps
for $C$).
\end{proof}

\subsection{The limited data operators}

We are concerned with the limited data problem when $\Rmu f$ is given
only for angles $\om$ in a closed subset, $A$, of $\snm$ with
nontrivial interior.  The limited data set from Section
\ref{sec:paradigm} is 
\[B = A\times \rr.\]
We assume $A$ has nontrivial interior so that $\chiar$ is not the zero
distribution.

Our next proposition shows that the limited data forward operator and
our reconstruction operator are defined for distributions.

\begin{proposition}\label{prop:composition} Let $\mu$  and $\nu$ be  smooth
functions on $\snm\times \rn$. Let $P$ be a pseudodifferential
operator on $\Ec'(\Xi)$ and let $A\subset \snm$ be a closed set with
nontrivial interior. Let $f\in \Ec'(\rn)$.  Then, the limited data
forward operator for data on $A\times \rr$, \bel{def:RmuA} \RmuA: =
\chiar \Rmu,\ee maps $\Ec'(\rn)$ to $\Ec'(\Xi)$.

  The limited data reconstruction operator \bel{def:LA} \LA: = \Rstn P
\RmuA \ee maps $\Ec'(\rn)$ to $\Dc'(\rn)$.  Here $\Rstn$ is defined by
\eqref{def:Rstn-rn}.
\end{proposition}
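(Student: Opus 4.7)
The plan is to verify that $\RmuA f = \chiar \Rmu f$ is a well-defined distribution with compact support, and then that applying the pseudodifferential operator $P$ and the backprojection $\Rstn$ yields a distribution in $\Dc'(\rn)$. The crux is defining the product $\chiar \cdot \Rmu f$, for which I invoke Theorem \ref{thm:WF mult} after computing the two relevant wavefront sets.

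First I would observe that $\Rmu f \in \Ec'(\Xi)$: if $\supp(f) \subset B_R(0)$, then $\Rmu f(\om, s) = 0$ whenever $\abs{s} > R$, so $\Rmu f$ is compactly supported in $\Xi = \snm \times \R$. Next I would compute $\WF(\chiar)$: since $\chiar(\om, s) = \chi_A(\om)$ is independent of $s$, every wavefront element has the form $((\om, s), \eta \d \om)$ with $\om \in \bd(A)$ and $\eta$ conormal to $\bd(A)$ in $\snm$---in particular, the $\d s$-component is always zero. On the other hand, by the H\"ormander--Sato Lemma \eqref{thm:HS} applied to $\Rmu$ together with the canonical relation $C$ from Proposition \ref{prop:canonical relations}, every element of $\WF(\Rmu f) \subset C \circ \WF(f)$ has the form $((\om, s), \alpha[-\Pw(x) \d \om + \d s])$ with $\alpha \neq 0$, so its $\d s$-component is non-zero.

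These two structural observations immediately yield the non-cancellation condition \eqref{non-cancellation} for $\chiar$ and $\Rmu f$: given any $((\om, s), \xi) \in \WF(\Rmu f)$, the covector $-\xi$ has non-zero $\d s$-component and so cannot lie in $\WF(\chiar)$. Theorem \ref{thm:WF mult} then gives $\chiar \Rmu f$ as a well-defined distribution, and since its support is contained in that of $\Rmu f$, which is compact, we obtain $\RmuA f \in \Ec'(\Xi)$.

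For the second assertion, the pseudodifferential operator $P$ maps $\Ec'(\Xi)$ continuously into $\Dc'(\Xi)$, so $P\RmuA f \in \Dc'(\Xi)$. The backprojection operator $\Rstn$ is a Fourier integral operator associated to $C^t$ (Proposition \ref{prop:canonical relations}) and is weakly continuous on appropriate distribution spaces by \cite{He:RT2011}, so $\LA f = \Rstn P \RmuA f \in \Dc'(\rn)$. The only step that looks like an obstacle is the non-cancellation check, but it trivializes because the two wavefront sets are structurally disjoint: $\WF(\chiar)$ lives in covectors with vanishing $\d s$-component, while $\WF(\Rmu f)$ lives in covectors with non-vanishing $\d s$-component.
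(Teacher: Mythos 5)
Your argument is correct and follows essentially the same route as the paper's own proof: both verify the non-cancellation condition by observing that every covector in $\WF(\chiar)$ has vanishing $\d s$-component while every covector in $\WF(\Rmu f)\subset C\circ\WF(f)$ has nonzero $\d s$-component, then invoke Theorem \ref{thm:WF mult} for the product and the mapping properties $P:\Ec'(\Xi)\to\Dc'(\Xi)$, $\Rstn:\Dc'(\Xi)\to\Dc'(\rn)$ for the composition. The only (harmless) imprecision is your side remark that $\eta$ is conormal to $\bd(A)$, which can fail when $\bd(A)$ is not smooth; your argument uses only the vanishing of the $\d s$-component, which holds regardless.
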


This proposition will be proven in the appendix as a part of the proof
of Theorem \ref{thm:characterization}.

%We first introduce notation we will use throughout the section.  The
%action of a distribution $f$ on a test function $\psi$ will be denoted
%by $\h{f,\psi}$, and when needed we will indicate the ambient space by
%$\h{\,\ ,\ }_\rn$ or $\h{\,\ ,\ }_\sxr$ or $\h{\,\ ,\ }_\rr$. The
%transforms $\Rc$ and $\Rst$ are defined on distributions by duality.
%Since $\Rst:\Ec(\sxr)\to\Ec(\rtwo)$ is continuous, $\Rc:\Ec'(\rtwo)\to
%\Ec'(\sxr)$ is weakly continuous using the following definition: for
%$f\in \Ec'(\rtwo)$, $\Rc f$ is the distribution defined by $\h{\Rc f,
%g}_{\sxr} = \h{f,\Rst g}_{\rtwo}$, One defines
%$\Rst:\Sc'(\sxr)\to\Sc'(\rtwo)$ in a similar way, and $\Rst$ is weakly
%continuous, too.  
%
%\tc{just summarize and get to theorem and then outline proof.}

To describe the ellipticity conditions in our theorems, we need to
define the following sets.

\begin{definition}\label{def:sets}  Let $A\subset \snm$. 
Define
\bel{def:VrA} \VrA =\sparen{(\om,s,\alpha[-z\d \om + \d s])\st
\om\in A, s\in \rr, z\in H(\om,0), \alpha \neq 0}\ee
and let
\bel{def:VA} \VA = \{(x,\alpha\om\d x):
x\in \rn,\ \alpha\neq 0,\ \om\in A\}. \ee
Now, for $f\in
\Ec'(\rn)$, define
\begin{equation}\label{def:WFA} \WF_{A}(f)=\WF(f)\cap \VA.
\end{equation}\end{definition}

In the next section, we will show that, if $P$ is elliptic on $\VrA$, then
our reconstruction operator will recover almost all visible singularities,
and we will prove that the set $\VA$ will contain singularities of the
object that are visible in the data $\Rmu f$.

\subsection{The characterization}

The next theorem provides a characterization of visible singularities and 
added artifacts in arbitrary dimensions (using reconstruction operators with 
arbitrary filters $P$), and it also provides a lower bound \eqref{elliptic equality} 
under an ellipticity assumption. %It is a generalization of the $\rtwo$-characterizations given in
%\cite{FrikelQuinto2013,Katsevich:1997}.
To state the result, we again let $A\subset \snm$ and we define \[(-1)A = \sparen{\om\in \snm\st -\om\in
A}.\]  Note that $\intt(A)$ is the interior of $A$, $\bd(A)$ is its
boundary, and $\cl(A)$ is its closure in $\snm$.

 \begin{theorem}\label{thm:characterization} Let $\mu$ and $\nu $ be a smooth
functions on $\snm\times \rn$. Let $P$ be a pseudodifferential operator
on $\Ec'(\Xi)$.  Let $A\subset \snm$ be a closed set with nontrivial
interior and let $\LA$ be defined by \eqref{def:LA}.  Then,
\begin{equation} \label{equn:characterization}
 \WF(\LA f)\subset \WF_{A}(f)\cup\Ac_{\bd(A)}(f),
\end{equation} 
where $\WFA(f)$ is defined in \eqref{def:WFA} and
\bel{def:A}\begin{aligned} \Ac_{\bd(A)}(f) =&
 \{(x+ty,\alpha\om\d
x)\st \om\in\bd(A),\ t\neq 0,\ \alpha\neq 0,\\ 
&\quad (x,\alpha\om\dx)\in\WF(f),\ y\in H(\om,0),\\
&\quad\quad \text{and } \ (\om, y\d\om)\in
\WF(\chia)\}
\end{aligned}\ee is the set of possible added artifacts.  

Now, assume that $\mu$ and $\nu$ are both strictly positive and the
top order symbol of $P$ is elliptic on $\VrA$ defined by
\eqref{def:VrA}.  Assume either 
\begin{enumerate}[(i)]

\item\label{cond:not symmetric} the following \emph{non-symmetry
condition} holds: \bel{not symmetric}\forall \om\in A,\ \ -\om\notin
A\ee 

or 

\item\label{cond:positive} the symbol of $P$ is real and is either
always positive or always negative on $\VrA$ and $A$ is symmetric
(that is $A=(-1)A$).

\end{enumerate}
Then, \bel{elliptic equality}
\WF_{\intt(A)}(f)= \WF_{\intt(A)}(\LA f).\ee
\end{theorem}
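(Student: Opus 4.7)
The plan is to derive \eqref{equn:characterization} by applying the paradigm of Section~\ref{sec:paradigm} step by step, and then to obtain \eqref{elliptic equality} via a microlocal ellipticity argument for the composition $\Rstn P \Rmu$ as a pseudodifferential operator near each visible covector. The main ingredients are Proposition~\ref{prop:canonical relations} (which identifies $C$ and shows $\Pi_L$ is injective while $\Pi_R$ is two-to-one), Theorem~\ref{thm:WF mult} for the multiplication by $\chiar$, and the H\"ormander--Sato Lemma \eqref{thm:HS}.

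For the upper bound I would execute steps (a)--(e) of the paradigm. Step (a) is Proposition~\ref{prop:canonical relations}. For step (b), since $\chiar(\om,s)=\chia(\om)$ does not depend on $s$, a standard computation yields
\[\WF(\chiar)\subset\sparen{((\om,s),\,y\d\om+0\cdot\d s)\st(\om,y\d\om)\in\WF(\chia),\ s\in\rr}.\]
Step (c) is automatic: every covector in $C\circ\WF(f)$ has $\d s$-coefficient $\alpha\neq0$ by \eqref{def:C}, while every covector in $\WF(\chiar)$ has $\d s$-coefficient $0$, so \eqref{non-cancellation} holds. For steps (d)--(e) I split the definition \eqref{def:Q} of $\Qc$ into cases. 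The pure term from $C\circ\WF(f)$ (with $\eta=0$) returns $\WFA(f)$ after applying $C^t$, using that the two preimages $\lamo,\lamon$ under $\Pi_R$ both map back to $(x,\xi\d x)$ by \eqref{lambdas}. The mixed term (both $\xi,\eta$ nonzero) gives covectors $\alpha[-\Pw(\tx)\d\om+\d s]+y\d\om$ with $(\tx,\alpha\om\d x)\in\WF(f)$, $\om\in\bd(A)$, and $(\om,y\d\om)\in\WF(\chia)$; rewriting this as $\alpha[-\Pw(\tx-y/\alpha)\d\om+\d s]$ and using injectivity of $\Pi_L$ yields $(\tx-y/\alpha,\alpha\om\d x)$, which after the reparameterization $t=-1/\alpha$ reproduces $\Ac_{\bd(A)}(f)$. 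The remaining case (pure $\WF(\chiar)$ term) produces covectors with zero $\d s$-coefficient, which are not in the image of $\Pi_L$ and so contribute nothing under $C^t$.

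For the lower bound, fix $(\xo,\xio\d x)\in\WF_{\intt(A)}(f)$ and set $\omo=\xio/\norm{\xio}\in\intt(A)$. The two preimages of $(\xo,\xio\d x)$ under $\Pi_R$ are $\lamo(\xo,\xio)$ (lying over $\omo\in\intt(A)$) and $\lamon(\xo,\xio)$ (lying over $-\omo$). Microlocally near $\lamo$ we have $\chiar\equiv 1$; microlocally near $\lamon$, $\chiar$ is either $0$ under case~\ref{cond:not symmetric} (since $-\omo\notin A$) or $1$ under case~\ref{cond:positive} (since $-\omo\in\intt(A)$ by symmetry). A direct computation with Proposition~\ref{prop:canonical relations} shows that $C^t\circ C$ equals the diagonal of $T^*(\rn)$, so $\LA = \Rstn P\chiar\Rmu$ is microlocally a pseudodifferential operator near $(\xo,\xio\d x)$ whose top-order symbol is a sum of contributions from those preimages at which $\chiar=1$. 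In case~\ref{cond:not symmetric} only $\lamo$ contributes, and the symbol is a nonzero multiple of $\mu(\omo,\xo)\nu(\omo,\xo)\sigma(P)(\lamo)$. In case~\ref{cond:positive} both $\lamo$ and $\lamon$ contribute; reality and fixed sign of $\sigma(P)$ on $\VrA$, combined with positivity of $\mu$ and $\nu$, force the two summands to share a sign and so cannot cancel. In either case the composition is elliptic at $(\xo,\xio\d x)$, whence $(\xo,\xio\d x)\in\WF(\LA f)$. Combined with \eqref{equn:characterization} and the observation $\Ac_{\bd(A)}(f)\cap\VintA=\emptyset$ (which uses $\bd(A)\cap\intt(A)=\emptyset$ together with the symmetry or non-symmetry hypothesis to exclude the sign flip $\om\mapsto-\om$), this yields \eqref{elliptic equality}.

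The principal obstacle is the symbol analysis in case~\ref{cond:positive}: because $\Pi_R$ is two-to-one, $\Rstn P\Rmu$ genuinely receives symbol contributions at both $\lamo$ and $\lamon$, and a priori these could interfere destructively. Carefully tracking the sign of $\alpha$ in the covectors $\pm\norm{\xi}[-\Pw(x)\d\om+\d s]$ and the orientation conventions in $\Rstn$, and verifying that the reality/fixed-sign hypothesis on $\sigma(P)$ really forces the two contributions to add rather than cancel, is the delicate point; the rest of the argument is a clean application of the FIO calculus.
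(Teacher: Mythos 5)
Your proposal is correct and follows essentially the same route as the paper: the paradigm of Section~\ref{sec:paradigm} with the same three-way decomposition of $\Qc(A\times\R,C\circ\WF(f))$ for the upper bound (including the same computation of the mixed term and the vanishing of $C^t\circ\WF(\chiar)$), and the two-preimage symbol sum at $\lamo,\lamon$ with the sign/non-symmetry analysis for the lower bound. The one step you leave informal --- that $\LA$ is ``microlocally a pseudodifferential operator'' near $(\xo,\xio\dx)$ --- is exactly what the paper formalizes by writing $\chiar=\vp+(\chiar-\vp)$ for a smooth cutoff $\vp$ supported in $A$ and equal to $1$ near $\omo$ (and, in case~(ii), near $-\omo$), applying Theorem~\ref{thm:symbol elliptic} to the $\vp$-part and checking that the remainder is smooth at both preimages $\lamo(\xo,\xio)$ and $\lamon(\xo,\xio)$.
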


Note that, either the non-symmetry condition \eqref{not symmetric} or
the symmetry condition in \eqref{cond:positive} on $A$ is needed.  To
see this, assume there is a vector $\omo\in \intt(A)$ for which
$-\omo\in \bd(A)$.  Then, an added artifact caused by a covector
$(x,(-\alpha)(-\omo)\dx)\in \WF(f)$ and a singularity at
$(-\omo,y\d\om)\in \WF(\chia)$ can create added artifacts at points
$(x+ty,\alpha\om\dx)$.

The proofs of this and the other main theorems are in the appendix.

\begin{remark}\label{remark:thm:characterization} 
This theorem provides both upper and lower bounds for the
singularities that can be reconstructed at the limited angular range
$\om \in A$.  The upper bound is given as the union of the set of
visible singularities $\WF_A(f)$ and added artifacts $\Ac_{\bd(A)}(f)$
by \eqref{equn:characterization}. This shows that if a singularity of
$f$ is not in $\VA$, then it is smoothed by $\LA$.  This is reflected
by the fact that $\WF(\RmuA f)\subset \VrA$, which can be proven using
Proposition \ref{prop:canonical relations}.

The lower bound is given by the equality \eqref{elliptic equality}
under an ellipticity assumption.  In particular, the equality
\eqref{elliptic equality} provides a guarantee that almost all visible
singularities will be reconstructed if the filters are chosen
appropriately. 
\end{remark}

\begin{remark}
Radon transforms detect singularities conormal to the set being
integrated over (e.g., \cite{GS1977,Palamodov,Quinto93}), and the
above theorem states this relation explicitly: only singularities
$(x,\alpha\om\d x)\in\WF(f)$ with directions in the visible angular
range, $\om\in A$ (i.e., in $\Vc_{A}$) can be reconstructed
from limited data.  Singularities of $f$ at covectors outside $\VA$
are smoothed.  

The added singularities occur in the following way.  Each singularity
of $f$ in a direction $\om\in \bd(A)$ is spread along one or more
lines.  If $(x,\alpha\om\dx)\in \WF(f)$, then singularities are spread
in the hyperplane $H(\om,x\cdot\om)$.  For each $y\in H(\om,0)$ with
$(\om,y\d\om)\in \WF(\chia)$, singularities are spread along the line
in $H(\om,x\cdot\om)$ through $x$ and parallel to $y$. So, if $\bd(A)$
is smooth at $\om$, then there is only one line of singularities
(because the only singularities come from vectors $y$ that are normal
to $\bd(A)$ at $\om$).  

However, if $\bd(A)$ is not smooth at $\om$, then the singularity at
$(x,\alpha\om\dx)$ is spread on other lines.  For example, if
$\bd(A)$ has a corner, then for every $y\in H(\om,0)\smo$,
$(\om,y\d\om)\in \WF(\chia)$ so singularities are spread along the
whole hyperplane $H(\om,x\cdot\om)$.
\end{remark}

\section{Reduction of artifacts and preservation of visible singularities}
\label{sec:reduction}

In previous sections we have shown that the added artifacts are
generated due to the hard truncation at the boundary of the angular
range in the limited data generalized Radon transform \eqref{eq:restricted Radon transform}  
and \eqref{def:RmuA}, respectively.
In this section, we define a modified version of the reconstruction
operators according to \cite{FrikelQuinto2013,FrikelQuinto2014,
Katsevich:1997,KLM} that replace the sharp cutoff
$\chiar$ by a smooth cutoff and we prove that for general
filters $P$ they do not add artifacts to the reconstruction.

Let $\vp$ be a smooth cutoff function supported in $A$. We replace
$\chiar$ by $\vp$ in the reconstruction operator and define the
modified (artifact-reduced) reconstruction operator as \bel{def:L}
\Lvp f = \Rstn P\Kvp \Rmu f \ \text{ where } \ \Kvp g(\om,s) =
\vp(\om)g(\om,s)\ee (and where $\mu$ and $\nu$ are smooth weights).
This method was analyzed for the line transform in $\rtwo$ and for the
lambda filter $P=-d^2/ds^2$ and the FBP filter $P=\sqrt{-d^2/ds^2}$
and with $R_1$ in \cite{FrikelQuinto2013} and with $\Rmu$ (and
backprojection $R^*_{1/\mu}$) in \cite{Katsevich:1997, KLM}. Our
theorems provide generalization to $\rn$ and to arbitrary filters $P$,
and they provide the symbol of $\Lvp$ in general.

\begin{theorem}\label{thm:reduction1}  
 Let $\mu$ and $\nu$ be  smooth weights and let $\vp$ be a smooth function
 supported in $A$.  Then \bel{right reduction containment} \WF(\Lvp
 (f))\subset \WF_{A}(f).\ee The top order symbol of $\Lvp$ is
 \bel{symbol}\begin{aligned} \sigma(\Lvp)(x,\xi\dx) =&
  \frac{(2\pi)^{n-1}}{\norm{\xi}^{n-1}} \Big[
    \vp(\om(\xi))p(\lamo(x,\xi))\nu(\om(\xi),x)\mu(\om(\xi),x)\\
    &\qquad+\vp(-\om(\xi))p(\lamon(x,\xi))\nu(-\om(\xi),x)\mu(-\om(\xi),x) \Big]
\end{aligned}\ee where $p$ is the top order symbol of the
pseudodifferential operator $P$ and the other notation is given in
\eqref{def:lambdas}.
\end{theorem}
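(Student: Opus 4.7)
The plan is to recognize $\Lvp$ as the composition $\Rstn \circ (P\Kvp) \circ \Rmu$, argue by the Bolker assumption that the composition is a pseudodifferential operator on $\rn$, then compute its principal symbol and derive the wavefront containment from where the symbol vanishes.

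First, I would observe that $\Kvp$ is a zeroth-order pseudodifferential operator on $\Ec'(\Xi)$ with full symbol $\vp(\om)$ (pure multiplication by a smooth function), so $P\Kvp$ is a pseudodifferential operator whose top order symbol is $\vp(\om) p(\om,s,\eta)$. Hence $\Lvp$ is a composition of an FIO with canonical relation $C$, a pseudodifferential operator, and an FIO with canonical relation $C^t$. By Proposition~\ref{prop:canonical relations}, $\Pi_L$ is an injective immersion (the Bolker assumption), and the identity $C^t \circ C \subset \Delta_{T^*(\rn)\smo}$ follows directly from \eqref{lambdas}. Standard composition theorems (clean intersection / transversal calculus, cf.\ \cite{Ho1971,Treves:1980vf}) then show that $\Lvp$ is a pseudodifferential operator on $\rn$; in particular $\Lvp: \Ec'(\rn) \to \Dc'(\rn)$ is continuous.

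Second, I would compute the principal symbol \eqref{symbol} either by (a) writing $\Rmu f(\om,s) = \int_{\rn} \mu(\om,x)\delta(x\cdot\om - s) f(x)\,\dx$, taking the 1D Fourier transform in $s$, applying the symbol of $P\Kvp$, and expressing $\Lvp f$ as an oscillatory integral after the polar change of variables $\xi = \sigma\om$; or (b) by invoking the FIO composition formula and summing the principal symbol contributions at the two preimages of $(x,\xi\dx)$ under $\Pi_R$ exhibited in Proposition~\ref{prop:canonical relations}, namely $\lamo(x,\xi)$ (for $\om = \om(\xi)$, $\alpha = \norm{\xi}$) and $\lamon(x,\xi)$ (for $\om = -\om(\xi)$, $\alpha = -\norm{\xi}$). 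In either approach, the factors $\mu(\pm\om(\xi),x)$ and $\nu(\pm\om(\xi),x)$ come from the symbols of $\Rmu$ and $\Rstn$, the factors $p(\lamo(x,\xi))$ and $p(\lamon(x,\xi))$ weighted by $\vp(\pm\om(\xi))$ from the symbol of $P\Kvp$, and the prefactor $(2\pi)^{n-1}/\norm{\xi}^{n-1}$ from the Jacobian of the polar change of variables on $\rn \smo$.

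Third, I would derive the wavefront containment \eqref{right reduction containment} from the vanishing of the symbol. Since the full symbol of $\Kvp$ is exactly $\vp(\om)$ and $\supp(\vp) \subset A$, every term in the asymptotic expansion of the full symbol of $\Lvp$ involves $\vp(\pm\om(\xi))$ or its derivatives evaluated at those antipodal directions. Consequently the full symbol of $\Lvp$ vanishes to infinite order at any $(x,\xi\dx)$ with neither $\om(\xi)$ nor $-\om(\xi)$ in $\supp(\vp)$, and in particular at any $(x,\xi\dx) \notin \VA$. Hence $\Lvp$ is microlocally smoothing off $\VA$, and by the pseudolocal property of pseudodifferential operators $\WF(\Lvp f) \subset \VA \cap \WF(f) = \WF_A(f)$.

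The main obstacle is in the second step: the careful bookkeeping of the two symbol contributions arising from the $\om \leftrightarrow -\om$ double cover by $\Pi_R$, keeping the correct antipodal arguments in $\mu,\nu,p,\vp$, and pinning down the prefactor $(2\pi)^{n-1}/\norm{\xi}^{n-1}$ with the correct normalization. This is most transparently handled by Route (a), where the polar change of variables $\xi = \sigma\om$ with $\sigma \in \rr$ (rather than just $\sigma > 0$) makes both the summation over antipodal directions and the Jacobian factor explicit.
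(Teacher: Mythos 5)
Your proposal is correct, and your route (b) is essentially the paper's own proof: the paper composes $\Rstn$, $P\Kvp$, and $\Rmu$ as FIOs using the half-density symbol conventions of \cite[Theorem 3.1]{Quinto1980}, and obtains \eqref{symbol} by summing the products of symbols at the two preimages $\lamo(x,\xi)$ and $\lamon(x,\xi)$ of $(x,\xi\dx)$ under the two-to-one map $\Pi_R$, with the prefactor $(2\pi)^{n-1}/\norm{\xi}^{n-1}$ coming from a density computation (\cite[Lemma 3.2]{Quinto1980}) rather than from your explicit polar-coordinates Jacobian, though these amount to the same bookkeeping. The only genuine divergence is in the containment \eqref{right reduction containment}: you argue that the full symbol of the composed pseudodifferential operator vanishes off $\VA$ and invoke microlocal smoothing plus pseudolocality, whereas the paper (in the proof of Theorem \ref{thm:reduction2}) notes that $\WF(\Kvp\Rmu f)\subset \VrA$ because $\supp(\vp)\subset A$, and then applies the H\"ormander--Sato lemma to $\Rstn$ to get $\WF(\Lvp f)\subset C^t\circ\VrA=\VA$, intersecting with $\WF(\Lvp f)\subset\WF(f)$. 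Both arguments are valid; the paper's version has the minor advantage of not requiring any statement about the lower-order terms of the composed symbol.
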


The proofs of this and the other main theorems are in the appendix.

The containment \eqref{right reduction containment} shows that the
modified reconstruction operators reconstruct only visible singularities
and, hence, do not add artifacts. This result provides an upper bound
for the visible singularities that are reconstructed through the modified
reconstruction operators $\Lvp$. In the following theorem, we also establish a
lower bound for the visible singularities under an ellipticity assumption
on the reconstruction operators.

\begin{remark}\label{remark:weights}  We now discuss two special
weights.  If $\nu = \mu$, then $\Rstn$ is the formal adjoint of $\Rmu$
and $\nu(\om(\xi),x)\mu(\om(\xi),x)$ is replaced by
$\mu^2(\om(\xi),x)$  in the symbol  of $\Lvp$.

If $\mu$ is nowhere zero and $\nu=1/\mu$, then the symbol of $\Lvp$ is
especially simple: \[\sigma(\Lvp)(x,\xi\dx) =
\frac{(2\pi)^{n-1}}{\norm{\xi}^{n-1}}
\bparen{\vp(\om(\xi))p(\lamo(x,\xi))+\vp(-\om(\xi))p(\lamon(x,\xi))},
\]  and so the top order symbol  of $\Lvp$ is not affected by the
weight $\mu$ and the only $x$ dependence comes from the choice of $P$,
as opposed to the general case with arbitrary $\mu$ and
$\nu$.\end{remark}

\begin{theorem}\label{thm:reduction2}   Let $\vp$
  be a nonnegative smooth function supported on $A$ and nonzero on
$\intt(A)$ and let $\mu$ and $\nu$ be smooth positive weights.  Assume
the symbol $\sigma(\Lvp)$ in \eqref{symbol} is elliptic on $\VintA$
defined in \eqref{def:VA}.  Then, \bel{left reduction
containment}\WF_{\intt(A)}(f) \subset \WF(\Lvp (f))\subset
\WF_{A}(f).\ee This implies that \bel{left equality}\WF_{\intt(A)}(f)
= \WF_{\intt(A)}(\Lvp (f)).\ee
\end{theorem}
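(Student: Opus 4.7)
My approach is to derive the two inclusions in \eqref{left reduction containment} separately and then deduce \eqref{left equality} by an intersection argument. The right inclusion $\WF(\Lvp f)\subset\WF_A(f)$ is already given by Theorem \ref{thm:reduction1}, so the substantive work is to prove the left inclusion $\WF_{\intt(A)}(f)\subset\WF(\Lvp f)$. For this I will show that $\Lvp$ is a pseudodifferential operator and then invoke elliptic regularity using the ellipticity hypothesis on $\sigma(\Lvp)$.

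First I would verify that $\Lvp = \Rstn(P\Kvp)\Rmu$ is a pseudodifferential operator on $\rn$. By Proposition \ref{prop:canonical relations}, $\Rmu$ and $\Rstn$ are FIOs associated to $C$ and $C^t$, the projection $\Pi_L:C\to T^\ast(\Xi)$ is an injective immersion (the Bolker assumption holds), and by \eqref{lambdas} both preimages of $(x,\xi\dx)\in T^\ast(\rn)\smo$ under $\Pi_R$ are sent back to $(x,\xi\dx)$ by $C^t$. Consequently $C^t\circ C$ equals the diagonal in $(T^\ast(\rn)\smo)\times(T^\ast(\rn)\smo)$, so the clean composition calculus for FIOs applies and gives that $\Lvp$ is a pseudodifferential operator with top order symbol \eqref{symbol}; this is implicit in Theorem \ref{thm:reduction1}. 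Since $\Kvp$ is simply multiplication by the smooth function $\vp(\om)$, it is pseudodifferential and poses no further obstruction in the composition.

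With $\Lvp$ identified as pseudodifferential I would apply elliptic regularity: for any pseudodifferential operator $Q$, $\WF(f)\setminus\WF(Qf)$ is contained in the characteristic set of $Q$. By the ellipticity hypothesis, $\sigma(\Lvp)$ is nonzero on $\VintA$, so every covector in $\VintA$ is noncharacteristic for $\Lvp$. Hence for $(x_0,\xi_0\dx)\in\WF_{\intt(A)}(f)=\WF(f)\cap\VintA$ we must have $(x_0,\xi_0\dx)\in\WF(\Lvp f)$, which proves the left inclusion in \eqref{left reduction containment}. Finally, to establish \eqref{left equality} I would intersect \eqref{left reduction containment} with $\VintA$: the left inclusion becomes $\WF_{\intt(A)}(f)\subset\WF_{\intt(A)}(\Lvp f)$, while the right inclusion together with the identity $\WF_A(f)\cap\VintA=\WF(f)\cap\VA\cap\VintA=\WF_{\intt(A)}(f)$ gives the reverse containment. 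The main obstacle is the verification that $\Lvp$ is genuinely a pseudodifferential operator rather than merely an FIO; this rests on the Bolker assumption and the self-cancellation in $C^t\circ C$ recorded in \eqref{lambdas}, both supplied by Proposition \ref{prop:canonical relations}, so once Theorem \ref{thm:reduction1} is in hand the remainder of the argument is a direct application of elliptic regularity.
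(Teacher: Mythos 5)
Your proposal is correct and follows essentially the same route as the paper: the left containment comes from microlocal elliptic regularity applied to the pseudodifferential operator $\Lvp$ (whose $\Psi$DO nature rests on the Bolker assumption and the clean composition underlying Theorem \ref{thm:reduction1}), the right containment is exactly \eqref{right reduction containment} from Theorem \ref{thm:reduction1} (which the paper re-derives in its proof via \eqref{visible1} and the H\"ormander--Sato lemma), and \eqref{left equality} follows by intersecting with $\VintA$ just as you describe.
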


This theorem shows that, as long as the filter $P$ is well-chosen, almost all
visible wavefront directions (those in $\WF_{\intt(A)}(f)$) are visible
using the artifact reduced operator $\Lvp$ and artifacts are not added
since $\WF( \Lvp(f))$ is contained in $\WF_{A}(f)$.

% \begin{example}\label{ex:symbol0}  The symbol condition of
% Theorem \ref{thm:reduction2} is necessary and one cannot just use any
% $P$ that is elliptic on $\Ac$.  
% 
% Let $\mu=1$ and let $R=R_1$ be the classical Radon line transform.
% Let $P=d/ds$, then $R^*PR = 0$ because $d/ds R f (\phi,s) = -d/ds
% Rf(\phi+\pi,-s)$ for any $f\in \Dc(\rn)$ and $R^*$ is ``even'' in
% $(\phi,s)\mapsto (\phi+\pi,-s)$.  As is expected, the expression
% \eqref{symbol} shows that the symbol $\sigma(R^*(d/ds)R)=0$.  To prove
% this note that the symbol of $P=d/ds$ is
% $p=\sigma(d/ds)(\phi,s,t\d\phi+\alpha\ds)=\alpha$.  So,
% $p(\lamo(x,\xi))= \norm{\xi}$ and $p(\lamon(x,\xi))=-\norm{\xi}$.  So
% the  $p$ terms cancel in \eqref{symbol}.\end{example}

Our next theorem provides conditions on the filters $P$ that guarantee
the ellipticity of the reconstruction operators $\Lvp$. In particular,
it specifies some cases in which Theorem \ref{thm:reduction2} can be
applied.

\begin{theorem}\label{thm:symbol elliptic}
Let $\vp$ be a nonnegative function supported in $A$ and nonzero on
$\intt(A)$.  Assume that $\mu$ and $\nu$ are smooth and strictly
positive and the top order symbol of $P$ is elliptic on $\VrA$ defined
by \eqref{def:VrA}.  Assume either 
\begin{enumerate}[(i)]

\item\label{b-a} The following \emph{non-symmetry
condition} holds: \bel{not symmetric-sect5}\forall \om\in A,\ \ -\om\notin
A, \ee 

\item\label{same sign} or the symbol of $P$ is real and either always
positive or always negative on $\VrA$.

\end{enumerate}
Then $\Lvp =\Rstn \Kvp P \Rmu$ is elliptic on $\Vc_{\intt(A)}$
(defined by \eqref{def:VA}). 
 Therefore,
\eqref{left reduction containment} and \eqref{left equality} hold.
\end{theorem}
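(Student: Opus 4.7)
The plan is to read off the symbol formula for $\Lvp$ from Theorem \ref{thm:reduction1} and verify that it is nonzero at every $(x,\xi\dx) \in \Vc_{\intt(A)}$ under each of the hypotheses (\ref{b-a}) and (\ref{same sign}). Ellipticity on $\Vc_{\intt(A)}$ is exactly the nonvanishing of the top order symbol there, so by \eqref{symbol} it suffices to show that
\[
\vp(\om(\xi))p(\lamo(x,\xi))\nu(\om(\xi),x)\mu(\om(\xi),x)+\vp(-\om(\xi))p(\lamon(x,\xi))\nu(-\om(\xi),x)\mu(-\om(\xi),x)\neq 0
\]
for every $(x,\xi\dx)\in \Vc_{\intt(A)}$. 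The final sentence about \eqref{left reduction containment} and \eqref{left equality} then follows immediately by applying Theorem \ref{thm:reduction2}.

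Fix $(x,\xio\dx)\in \Vc_{\intt(A)}$, so by \eqref{def:VA}, $\om(\xio)\in \intt(A)$. First I note that $\lamo(x,\xio)\in \VrA$ by construction in \eqref{def:lambdas}, so ellipticity of $P$ on $\VrA$ gives $p(\lamo(x,\xio))\neq 0$. Since $\vp$ is nonzero on $\intt(A)$ and nonnegative, $\vp(\om(\xio))>0$; together with $\mu,\nu>0$ this makes the first summand strictly nonzero and of the same sign as $p(\lamo(x,\xio))$. Under hypothesis (\ref{b-a}), the non-symmetry condition \eqref{not symmetric-sect5} gives $-\om(\xio)\notin A$ (because $\om(\xio)\in A$), so $\vp(-\om(\xio))=0$ and the second summand vanishes; the whole symbol then reduces to a nonzero multiple of the first term and ellipticity follows.

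Under hypothesis (\ref{same sign}), $A$ is symmetric (this is part of the setup in the statement of Theorem \ref{thm:characterization}(\ref{cond:positive}); I would invoke the same convention here, and if it is not already tacit in the statement I would add it explicitly). Then $-\om(\xio)\in A$, so $\lamon(x,\xio)\in \VrA$ as well and both $p(\lamo(x,\xio))$ and $p(\lamon(x,\xio))$ are nonzero and, by the sign-definiteness of $p$, have the same sign. The second summand is then a nonnegative or nonpositive multiple of a nonzero quantity of the same sign as the first summand; in particular the two contributions cannot cancel, and the total is strictly nonzero.

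The only subtle point is the sign-cancellation issue in case (\ref{same sign}): without a definite sign on $p$, the two contributions $p(\lamo)$ and $p(\lamon)$ could in principle conspire with the weights to cancel, which is precisely why the hypothesis of a real, sign-definite top order symbol is imposed. Once this observation is made, the two cases combine into a single clean verification that $\sigma(\Lvp)\neq 0$ on $\Vc_{\intt(A)}$. With ellipticity in hand, \eqref{left reduction containment} and \eqref{left equality} follow verbatim from Theorem \ref{thm:reduction2}.
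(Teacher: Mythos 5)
Your proposal is correct and follows essentially the same route as the paper's proof: evaluate the symbol formula \eqref{symbol} from Theorem \ref{thm:reduction1} at a covector in $\Vc_{\intt(A)}$, show that the bracketed sum cannot vanish in either case, and then invoke Theorem \ref{thm:reduction2}.

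Two small points. First, for $(x,\xio\dx)\in\Vc_{\intt(A)}$ you can only conclude that $\om(\xio)$ \emph{or} $-\om(\xio)$ lies in $\intt(A)$ (the $\alpha$ in \eqref{def:VA} may be negative), so a without-loss-of-generality reduction is needed before asserting $\vp(\om(\xio))>0$; the paper makes this reduction explicitly. Second, and more substantively: in case (\ref{same sign}) you import the symmetry hypothesis $A=(-1)A$ from Theorem \ref{thm:characterization}(\ref{cond:positive}), but Theorem \ref{thm:symbol elliptic} does not assume it and does not need it. The reason is that the second summand in \eqref{symbol} carries the factor $\vp(-\om(\xio))$ and $\vp$ is supported in $A$; so if $-\om(\xio)\notin A$ that summand is identically zero, while if $-\om(\xio)\in A$ then $\lamon(x,\xio)\in\VrA$ and the sign-definiteness of $p$ applies there. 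Either way the second term is zero or of the same sign as the first, which is exactly the dichotomy the paper's proof records. Your argument covers the symmetric case correctly but, as written, leaves the sub-case where hypothesis (\ref{same sign}) holds with $A$ non-symmetric and $-\om(\xio)\notin A$ unaddressed; the fix is the one-line support observation above rather than an added hypothesis.
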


Note that condition \eqref{not symmetric-sect5} is the same condition,
\eqref{not symmetric}, used in Theorem \ref{thm:characterization},

\begin{example} We now discuss these conditions for the Radon line
transform in the plane and for $\mu=\nu\equiv1$.

First, we consider two standard filters, $P$.  Condition \eqref{same
sign} holds, for example, if $P=-d^2/ds^2$, the filter in Lambda
tomography, or $P=\sqrt{-d^2/ds^2}$, the filter in FBP because, in
both cases, the symbol is of the same sign on $\VrA$ (e.g.,
$\sigma(\sqrt{-d^2/ds^2})(\om,s,\beta\d\om + \alpha\d s)=\abs{\alpha}$
is real and never zero), and our theorem can be applied to these
operators.

Now, let $P=d/ds$.  If $A= \sparen{(\cos(\phi),\sin(\phi))\st \phi\in
[a,b]}$ and $b-a<\pi$, then condition \eqref{b-a} holds.  Since the
symbol of $d/ds$ is nowhere zero on $\VrA$, $\Lvp$ is elliptic on
$\Vc_{\intt(A)}$.

However, if $b-a>\pi$, ellipticity of $P$ is not sufficient for
ellipticity of $\Lvp$.  For example, consider the full data problem
for the classical transform $R_1$ and $P = (-i)d/ds$, then
$\sigma(P)(\om,s,\beta\d\om+\alpha\d s) = \alpha $ changes sign on
$\VrA$, even though $P$ is elliptic, and the operator $R_1^*((-i)d/ds
\,R_1)=0$ by symmetry.  Of course, the analogous point can be made for
$P=d/ds$.
\end{example}

% !TEX root = ./Paradigm_Artifacts.tex
% ======================================================
\section{Concluding remarks} 

In this work, we have characterized
visible singularities and added artifacts for the limited data
problem associated with the restricted generalized Radon (hyperplane) transform in $\rn$. In
particular, we analyzed filtered backprojection reconstruction
operators with general filters and proved that a simple modification
of these operators leads to an artifact reduction. To the best of our knowledge
this work for the first time provides characterizations of artifacts for the 
restricted generalized Radon transform in $\rn$ (Theorem \ref{thm:characterization}), which includes the classical
setup in $\rtwo$ as a special case (Corollary \ref{cor:characterization-rtwo}). However, even in the case of $\rtwo$ our results
are more general than the characterizations presented in \cite{FrikelQuinto2013,Katsevich:1997} 
since they are valid for general reconstruction operators with arbitrary filters. 
Our proofs use the general paradigm (originally developed in 
\cite{FrikelQuintoPreprint,FrikelQuinto2014}) that is based on the calculus of Fourier integral operators and
microlocal analysis. This technique is substantially different from the 
one used in \cite{FrikelQuinto2013,Katsevich:1997} where the authors use
explicit expressions of the considered (specific) reconstruction operators (and hence they
know the symbols of these operators explicitly).
We would like to note that the paradigm that we use in our proofs does not 
provide a way to characterize which part of the visible singularities will be reconstructed,
it enables us to derive only an upper bound for the wavefront set of
the limited data reconstructions. In fact, no lower bound can be derived 
for general reconstruction operators with arbitrary filters.
To guarantee that most of the visible singularities will
be reconstructed we need to make sure that the reconstruction
operators are elliptic. This can be done by choosing the filters
appropriately.  As one of our main results, in Theorem
\ref{thm:symbol elliptic}, we derive conditions for filters that
guarantee ellipticity of the filtered backprojection reconstruction
operators.  To that end, we calculate the symbol of the general
reconstruction operators in Theorem \ref{thm:reduction1}.

\section*{Acknowledgments} The authors thank Frank Filbir for
encouraging this collaboration.  They thank Adel Faridani and
Alexander Katsevich for pointing out important references. They also
thank Venkateswaran Krishnan for valuable comments concerning this
work. They first author acknowledges support from the HC \O rsted
Postdoc programme, co-funded by Marie Curie Actions.  The work of the
second author is partially supported by NSF grant DMS 1311558.

% !TEX root = ./Paradigm_Artifacts.tex
\begin{appendix}
\section{Appendix}

%\tc{How do we handle the proofs in $\rtwo$?  Shall we just point to
%the map from the $\rtwo$ notation e.g., $\om\to \phi$, $\Pw(x)\d \om\to
%(x\cdot\thperp(\phi)\d \phi)$?}

We prove our main theorems in this appendix because the proofs are all
related.

 \begin{proof}[Proof of Proposition \ref{prop:composition} and Theorem
\ref{thm:characterization}] We use the paradigm presented in Section
\ref{sec:paradigm} to prove \eqref{equn:characterization}.  By
Proposition \ref{prop:canonical relations}, we know that $\Rmu$ is a
Fourier integral operator with the canonical relation given in
\eqref{def:C}. Thus, the step \eqref{paradigm:step1} of our paradigm is
carried out.

For the step \eqref{paradigm:step2}, we consider $B=A\times\R$ and
compute $\WF(\chiar)$.  Note that \bel{WF(chiar)}\WF(\chiar)=
\sparen{(\om,s;\eta+0\ds)\st (\om,\eta)\in \WF(\chia),\ s\in \rr}.\ee
Note that $\chia$ is smooth on the complement of $\bd(A)$, so
the covectors in $\WF(\chiar)$ all have $\om\in \bd(A)$.  

%\tred{If $A$ has smooth boundary, 
%\[\WF(\chiar)=\sparen{(\om,s,\eta+0\ds)\st \om\in \bd(A),\ s\in
%  \rr,\ \eta\in N^*_\om(A)\smo},\] and where $N^*_\om(A)$ is the vector
%space of all covectors that are conormal $\bd(A)$ at $\om$.  }
%
%\tc{Do we need the text in red here?  Originally, it was in the main
%part of the article, but we address this in Remark 2, I think when we
%point out that, if $\bd(A)$ is smooth, then there is $\pm$one conormal
%direction. }

First, note that every covector in $C\circ \paren{T^*(\rn)}$ has
nonzero $\ds$ component by \eqref{def:C}. Therefore, every covector in
$\WF(\Rmu f)$ has nonzero $\ds$ component.  Since $\WF(\chiar)$ has
zero $\d s$-component, we see that the non-cancellation condition
\eqref{non-cancellation} holds. This is step \eqref{paradigm:step3} of
our paradigm.  Hence, by Theorem \ref{thm:WF mult}, the product $\RmuA
f = \chiar\Rmu f$ is well-defined as a distribution with compact
support since $f\in \Ec'(\rn)$.  This proves that $\RmuA:\Ec'(\rn)\to
\Ec'(\Xi)$, the first statement in Proposition
\ref{prop:composition}.  Since $P:\Ec'(\Xi)\to \Dc'(\Xi)$ and
$\Rstn:\Dc'(\Xi)\to\Dc'(\rn)$, the final statement of that Proposition
also holds.

Continuing the proof of Theorem \ref{thm:characterization}, we do the
next step, \eqref{paradigm:step4}, of the paradigm, and calculate
$\Qc(A\times\R,C\circ\WF(f))$ using \eqref{def:Q}.  By definition, the
set $\Qc(A\times\R,C\circ\WF(f))$ is a union of three sets:
\bel{eq:wavefront set restricted Radon transform}
\begin{aligned}
	\Qc(A\times\R,&C\circ\WF(f))  \\
&=\bparen{(C\circ\WF(f)) \cap
	\{((\om,s),\eta)\in T^\ast (\Xi)\st \om\in A\}}\\
 &\qquad\qquad\cup
	\WF(\chiar) \cup \WbdA(f),
\end{aligned}\ee
where the first set (in braces) corresponds to $\xi\neq 0$, $\eta=0$ in the
expression ``$(y,\xi+\eta)$'' in the definition of $\Qc$, \eqref{def:Q},
the second to $\xi=0$, $\eta\neq 0$ and the third, $\WbdA(f)$, corresponds
to $\xi\neq 0$, $\eta\neq 0$.

Note that points in this third set, $\WbdA(f)$, are sums of covectors
in $WF(\chiar)$ and covectors in $C\circ \WF(f)$ that have the same
base points.  The only way a common base point occurs is when
\begin{enumerate}[(i)]
\item \label{bd(A)} $\om\in \bd(A)$ and there is a $y\in H(\om,0)$
with $(\om,y\d\om)\in \WF(\chia)$, generating singularities in
$\WF(\chiar)$, and

\item \label{WF(f)} there is an $x\in \rn$ and $\alpha\neq 0$ so that
$(x,\alpha\om\dx)\in \WF(f)$, generating singularities in $C\circ
\WF(f)$.
\end{enumerate}
In this case, the common base point is $(\om,x\cdot\om)$.
Since $\chia$ is a real function, if $(\om,y\d\om)\in \WF(\chia)$,
then so is $(\om,ty\d\om)$ for any $t\neq 0$.  Therefore,
corresponding covectors for \eqref{bd(A)} in $\WF(\chiar)$ are given
by \bel{bd(A)-covector} \sparen{(\om,x\cdot\om; t y\d\om+0\ds) \st
t\neq 0,\ (\om,y\d\om)\in \WF(\chia)}\ee Similarly, the corresponding
covectors for \eqref{WF(f)} in $C\circ \WF(f)$ are
\bel{WF(f)-covector} \sparen{(\om,x\cdot\om;
t'\alpha(-\Pw(x)\d\om+\ds))\st t'>0} \ee ($f$ is not assumed to be
real, so $\WF(f)$ is only \emph{positive} homogenous in the cotangent
coordinate).

Adding covectors in \eqref{bd(A)-covector} and \eqref{WF(f)-covector}
gives covectors in $\WbdA(f)$. Putting all this together,
\bel{sum}\begin{aligned}W(\om,x,\alpha):=&\big\{\paren{\om,x\cdot\om;
-\alpha t'\paren{\bparen{\Pw(x)+ty}\d\om+\ds}}\st\\& \qquad t'>0,t\neq
0, (\om,y\d\om)\in \WF(\chia)\big\}\end{aligned}\ee is the subset of
$\WbdA(f)$ associated to $\om\in \bd(A)$ and each $\alpha\neq 0$
and each $x\in \rn$ such that $(x,\alpha\om\dx)\in \WF(f)$. Note that
we have rescaled $t$ in order to factor as indicated in \eqref{sum}.

To accomplish the step \eqref{paradigm:step5} in our paradigm, we let
$P$ be a pseudodifferential operator.  Then, by containment
\eqref{final WF containment},
\[\WF(\Rstn P\RmuA f)\subset
C^t\circ\Qc\paren{A\times\R,C\circ \WF(f)}.\]

We now compute $C^t\circ\Qc\paren{A\times\R,C\circ \WF(f)}$. Using
\eqref{eq:wavefront set restricted Radon transform} and the
composition rules, first observe that
\bel{eq:three parts of the wavefront set}\begin{aligned}
	C^t \circ \Qc(&A\times\R,C\circ \WF(f)) \\
&= C^t  \circ
	\big[(C\circ\WF(f))\cap \{((\om,s),\eta)\in T^\ast (\Xi)
\st \om\in A\}\big]\\ 
	 & \qquad \cup C^t  \circ \WF(\chiar)\\
&\qquad \qquad	  \cup C^t  \circ \WbdA(f).\end{aligned}\ee

We examine the three terms in the right side of the equation
\eqref{eq:three parts of the wavefront set} separately. First, we get
\begin{multline*}
	C^t  \circ \big[(C\circ\WF(f)) \cap
	\{((\om,s),\eta)\in T^\ast (\Xi)\st \om\in A\}\big] \\
	=\big[(C^t  \circ C )\circ\WF(f))\big] \cap \big[C^t \circ
	\{((\om,s),\eta)\in T^\ast (\Xi)\st \om\in A\}\big].
\end{multline*}
Because $\Pi_L$ is injective and $\Pi_R$ is surjective to
$T^*(\rn)\smo$, \[C^t\circ C =
\Delta :=\sparen{(x,\xi \d x;x,\xi\d x)\st (x,\xi\d x)\in
T^\ast\rn\smo}\] and $\Delta\circ\WF(f)=\WF(f)$. Furthermore,
\bel{visible1} C^t \circ \{((\om,s),\eta)\in T^\ast (\Xi)\st
\om\in A\} = \VA.\ee Hence, the first set in
\eqref{eq:three parts of the wavefront set} is equal to the set of
visible singularities  \eqref{def:WFA} \[\WF_{A} (f)=\WF(f)\cap
\VA.\]

For the second set in \eqref{eq:three parts of the wavefront set}
observe that $C^t\circ \WF(\chiar)=\emptyset$ since the
$\d s$-components of covectors in $\WF(\chiar)$ is zero
and the $\d s$-components of covectors in $C^t$ is always non-zero. 

Finally, we consider the set $C^t\circ \WbdA(f) $.  Let $\om\in
\bd(A)$ and let $x\in \rn$ and $\alpha\neq 0$ such that
$(x,\alpha\om\dx)\in \WF(f)$.  Then, \eqref{sum} gives 
the subset, $W(\om,x,\alpha) $, of  $\WbdA(f)$ associated to $\om$, $x$,
and $\alpha$.  For each  $y$ such
that $(\om,y\d\om)\in \WF(\chia)$, there are elements of
$W(\om,x,\alpha)$ for each $t\neq 0$ and $t'>0$:
\[\gamma = (\om,x\cdot\om; -\alpha
t'\paren{\bparen{\Pw(x)+ty}\d\om+\ds}).\] Then,
$C^t\circ\sparen{\gamma}$ is the covector $(x',\alpha\om\dx)$, such
that $x'\in H(\om,x\cdot \om)$ (since $x'\cdot\om = x\cdot \om$) and
such that \[\Pw(x') = \Pw(x) + ty\] (see the definition of $C^t$ and
\eqref{def:C}). Since $x'\in H(\om,x\cdot\om)$, \[x' = x+ty\] and since
$t$ is arbitrary, the set \bel{added sings}\sparen{(x+ty,
\alpha\om\dx)\st t\neq 0,\ (\om,y\d\om)\in \WF(\chia)}\ee is the set
of added singularities coming from $\om\in \bd(A)$, $y\in H(\om,0)$
such that $(\om,y\d\om)\in WF(\chia)$ and $x\in \rn$ and $\alpha\neq 0$
such that $(x,\alpha\om\dx)\in \WF(f)$.  This proves
\eqref{equn:characterization} and \eqref{def:A}.

\medskip

Containment \eqref{elliptic equality} is proven using Theorem
\ref{thm:symbol elliptic}, which is proven below.  Let $(x,\xi\dx) \in
\WF(f)\cap \Vc_{\intt(A)}$.  Then, at least one of the unit vectors
$\om(\xi)$ or $-\om(\xi)$ (defined in Proposition \ref{prop:canonical
relations}) is in $\intt(A)$.  Without loss of generality, assume
$\omo=\om(\xi)\in \intt(A)$.

First we consider case \eqref{cond:not symmetric}.  Let $\vp$ be a
smooth cutoff function in $\om$ that is supported in a small open set
$U\subset A$ and equal to one in a smaller neighborhood $U'$ of
$\omo$. Since $\cl(U)\subset A$, if $\om \in \cl(U)$ then $-\om\notin
\cl(U)$.

We define $\Kvp$ as the multiplication operator $\Kvp g(\om,s) =
\vp(\om)g(\om,s)$.

Let \[g_1=P\Kvp \Rmu(f), \quad g_2 = P\bparen{\chiar-\vp} \Rmu(f).\] By
Theorem \ref{thm:symbol elliptic} part \eqref{b-a} applied to the set
$\cl(U)$, the symbol of $\Rstn P \Kvp \Rmu$ is elliptic on $\Vc_{U}$
and so at $(x,\xi\dx)$.  Therefore, $(x,\xi\dx)\in \WF(\Rstn g_1 )$.

We now show $(x,\xi\dx)\notin \WF\paren{\Rstn g_2}$.  Because
$\bparen{\chi_{A\times\rr}- \vp}$ is zero on $U'\times \rr$,
$\bparen{\chi_{A\times\rr}- \vp}\Rmu f$ is zero on $U'\times \rr$.
Therefore, $g_2= P\bparen{\chi_{A\times\rr}- \vp} \Rmu(f))$ is smooth
on $U'\times \rr$.  Since $\om(\xi)\in U'$, $\lamo(x,\xi)\notin
\WF(g_2)$.  By the non-symmetry condition \eqref{not symmetric},
$-\om(\xi)\notin A$, so $g_2$ is zero and hence smooth near
$-\om(\xi)$. This implies that $\lamon(x,\xi)\notin \WF(g_2)$.  Using
the H\"ormander-Sato Lemma (see \eqref{thm:HS}) $\WF(\Rstn g_2)\subset
C^t\circ \WF (g_2)$, so, by \eqref{lambdas} the only two covectors,
$\lamo(x,\xi)$ and $\lamon(x,\xi)$, that can contribute to wavefront
of $\Rstn g_2$ at $(x,\xi\dx)$ are not in $\WF(g_2)$ so
$(x,\xi\dx)\notin \WF(\Rstn g_2)$.

Therefore, $(x,\xi\dx)\in \WF(\Rstn g_1+\Rstn g_2)=WF(\LA f)$, and
this proves the final part of the theorem in case  \eqref{cond:not
symmetric}.

Now we consider case \eqref{cond:positive} and note that the symbol of
$P$ is of the same sign on $\VrA$.  Let $\omo\in \intt(A)$.  Then,
$-\omo\in \intt(A)$ by the symmetry condition for this case.  We let
$U\subset A$ be a  neighborhood of $\omo$ small enough so that $U$ is disjoint
from $(-1)U$.  Since $A$ is symmetric, so is $\intt(A)$, and
$(-1)U\subset \intt(A)$.  Let $\tU=U\cup (-1)U$ and let $\vp$ be a
smooth, nonnegative, even function that is supported in $\tU$ and is
one in a smaller neighborhood, $U'$ of $\omo$ (and therefore in the
neighborhood of $(-1)U'$ of $-\omo$).  Let $g_1$ and $g_2$ be as
defined in the first part of the proof.  By Theorem \ref{thm:symbol
elliptic} case \eqref{same sign} applied on $\cl(\tU)$, the symbol of
$\Rstn P \Kvp \Rmu$ is elliptic on $\Vc_{U}$ and so at $(x,\xi\dx)$.
Therefore, $(x,\xi\dx)\in \WF(\Rstn g_1 )$.

We now show $(x,\xi\dx)\notin \WF\paren{\Rstn g_2}$.  Let $\tU'=U'\cup
(-1)U'$.  Because $\bparen{\chi_{A\times\rr}- \vp}$ is zero on
$\tU'\times \rr$, $\bparen{\chi_{A\times\rr}- \vp}\Rmu f$ is zero on
$\tU'\times \rr$.  Therefore, $g_2= P\bparen{\chi_{A\times\rr}- \vp}
\Rmu(f))$ is smooth on $\tU'\times \rr$.  Since $\om(\xi)\in U'\subset
\tU'$, $\lamo(x,\xi)\notin \WF(g_2)$.  For the analogous reason,
$\lamon(x,\xi)\notin \WF(g_2)$.  The final part of the proof continues
as for the case \eqref{cond:not symmetric} to conclude that
$(x,\xi\dx)\in \WF(\Rstn g_1+\Rstn g_2)=WF(\LA f)$.\end{proof}

We now give the proofs of Theorems
\ref{thm:reduction1}-\ref{thm:symbol elliptic}.

\begin{proof}[Proof of Theorem \ref{thm:reduction1}]
  We use the notation, conventions, and symbol calculation in
\cite[Theorem 3.1]{Quinto1980}.  Recall that $\Pi_R:C\to
T^*(\rn)\smo$ and $\Pi_L:C\to T^*(\Xi)\smo$ are the natural
projections.  Let
\[Z=\sparen{(\om,s,x)\st \om\in \snm, x\in
\rn, s=\om\cdot x},\] then $Z$ is the set in over which the Schwartz
kernel of $\Rmu$ integrates (e.g., \cite{Quinto1980}).  To define the
measures used in \cite{Quinto1980}, we define global coordinates for
$Z$:  \[(\om,x)\mapsto (\om,x\cdot\om,x).\] Of course, $(\om,s,x)$ are
global coordinates on $\Xi\times \rn$. The measure on $Z$
associated to $\Rmu$ is $\mu(\om,x)d\om\,d x $ (see equation (16) in
\cite{Quinto1980}).  Equation (14) in \cite{Quinto1980} and the
discussion below it give the symbol of $\Rmu$ as the half density
\bel{symbol Rmu}\sigma(\Rmu)=\frac{(2\pi)^{(n-1)/2}\mu(\om,x)d \om\, d
x\, \sqrt{ds\,d\eta}}{\sqrt{d\om\, d s \, d x
}\,\Pi_R^*(\abs{\sigma_\rn})}\ee where $\abs{\sigma_\rn}$ is the
density from the canonical symplectic form on $T^*(\rn)$ and
$\Pi_R^*(\abs{\sigma_\rn})$ is its pull back to $C$.  Finally $\eta$
is the fiber coordinate in the conormal bundle of $Z$.  A similar
proof shows that the symbol of $\Rstn$ is given by \bel{symbol
Rstn}\sigma(\Rstn)=\frac{(2\pi)^{(n-1)/2}\nu(\om,x)d \om\, d x
\sqrt{ds\,d\eta}}{\sqrt{ d\om\, d s\, d
x}\,\Pi_L^*(\abs{\sigma_\Xi})}.\ee The pseudodifferential operator
$P\Kvp$ has symbol $\vp(\om)p(\om,s,\gamma)$ (where $\gamma\in
T^*_{(\om,s)}(\Xi)$) so $P\Kvp \Rmu$ is a standard smooth FIO and its
top order symbol is
\[\sigma(P \Kvp
\Rmu)=\frac{(2\pi)^{(n-1)/2}p(\om,s,\gamma)\vp(\om)\mu(\om,x)d \om\, d x
\sqrt{ds\,d\eta}}{\sqrt{d\om \,d s \,d x }\,\Pi_R^*(\abs{\sigma_\rn})}\] when evaluated at
covectors on $C$.

 Let $(x,\xi\dx)\in T^*(\rn)\smo$.  To calculate the symbol of the
composition of $\Rstn$ with $P\Kvp \Rmu$ one uses the note at the top
of p.\ 338 of \cite{Quinto1980}: since the projection $\Pi_R:C\to
T^*(\rn)\smo$ is two-to-one, the symbol of $\Rstn P \Kvp \Rmu$ at
$(x,\xi\dx)\in T^*(\rn)$ is the sum of the product
$\sigma(\Rstn)\cdot\sigma(P\Kvp \Rmu)$ at the two preimages.  Those
preimages are given by $\Pi_R\inv(x,\xi\dx)$, and by Proposition
\ref{prop:canonical relations}, they are the two covectors
$(\lamo(x,\xi);x,\xi\dx)$ and $(\lamon(x,\xi);x,\xi\dx)$.

Under the conventions of \cite{Quinto1980}, the symbol of $\Rstn P\Kvp
\Rmu$ at $(x,\xi\dx)$ is the sum
\bel{symbol-start}\begin{aligned}\sigma(\Rstn P \Kvp \Rmu)&(x,\xi\dx)
= \sparen{\frac{(2\pi)^{n-1} (d \om\, \d x)^2 d s\,
d\eta}{d \om\, d s\,d x\,\Pi_R^*(\abs{\sigma_\rn})\,\Pi^*_L(\abs{\sigma_\Xi})}}\\
&\quad\times 
\big[\vp(\om(\xi))\nu(\om(\xi),x)\mu(\om(\xi),x)p(\lamo(x,\xi))\\
&\quad\quad+\vp(-\om(\xi))\nu(-\om(\xi),x)\mu(-\om(\xi),x)p(\lamon(x,\xi))\big]\end{aligned}\ee
Now, \cite[Lemma 3.2]{Quinto1980} shows, for the Radon line transform,
that the term on the top right in braces in \eqref{symbol-start} can
be  simplified to equal to $(2\pi)^{n-1}/\norm{\xi}^{n-1}$.  Putting this
into \eqref{symbol-start} proves the symbol calculation
\eqref{symbol}.
\end{proof}

\begin{proof}[Proof of Theorem \ref{thm:reduction2}]
Because the symbol of $\Lvp$ is elliptic on $\VintA$ by assumption, the
left hand containment \eqref{left reduction containment} follows from
e.g., \cite[Prop.\ 6.9]{Treves:1980vf}. 

We now prove that the right-hand containment in \eqref{left reduction
containment}.  First, note that $\WF(\Lvp f)\subset \WF(f)$ since $\Lvp$ is a
standard pseudodifferential operator.  Second, $\Lvp$ smooths outside
of $\Vc_{A}$ for the following reason.  Since $\vp$ is zero
outside of $\sparen{(\om,s)\st \om\in A}$, $\WF(\vp\Rmu f)\subset
V^R_{A}$.  Now, using \eqref{visible1} and the H\"ormander Sato
Lemma \eqref{thm:HS}, one shows that $\WF(\Lvp f)\subset C^t\circ \VrA
=\Vc_{A}$.
This proves the right hand containment. 
\end{proof}

\begin{proof}[Proof of Theorem \ref{thm:symbol elliptic}]
In each case, we will show that $\sigma(\Lvp)$ is elliptic on
$\Vc_{\intt(A)}$.  Let $(x,\xi\dx)\in \Vc_{\intt(A)}$, then either
$\om(\xi)$ or $-\om(\xi)$ or both are in $\intt(A)$. Without loss
of generality, we assume $\om(\xi)\in \intt(A)$.  Therefore,
$\vp(\om(\xi))\neq 0$.  

In case \eqref{b-a}, we assume $\mu$ and $\nu$ are smooth and nowhere
zero, and we assume that, if $\om(\xi)\in \intt(A)$, then
$-\om(\xi)\notin A$.  Therefore, one and only one term in brackets in
\eqref{symbol} is nonzero, and the symbol is elliptic on
$\VintA$.

In case \eqref{same sign}, we assume $\mu$ and $\nu$ are positive, the
top order symbol of $P$, $\sigma(P)=p$, is real, elliptic, and of the
same sign everywhere on $\VrA$.  Since $\vp\neq 0$ on $\intt(A)$ and
$\nu\mu>0$, at least the first term in brackets in \eqref{symbol} (the
one containing $\om(\xi)$) is nonzero.  The second term (containing
$-\om(\xi)$) either has the same sign as this term (since the sign of
$p$ does not change) or is zero (if $\vp(-\om(\xi))=0$).  Therefore
the sum is nonzero and so the symbol of $\Lvp$ is elliptic on
$\Vc_{\intt(A)}$.

In either case, we have concluded the symbol of $\Lvp$ is elliptic on
$\VintA$.  Now, one can use the conclusion of Theorem \ref{thm:reduction2}
to finish the proof.
\end{proof}

\end{appendix}

\bibliographystyle{abbrv}
\bibliography{references}
\nocite{FrikelQuintoPreprint}

\end{document}